\numberwithin{equation}{section}
\theoremstyle{plain}
\newtheorem{thm}{Theorem}[section]
\newtheorem{prop}[thm]{Proposition}
\newtheorem{cor}[thm]{Corollary}
\newtheorem{lem}[thm]{Lemma}
\theoremstyle{definition}
\newtheorem{exa}[thm]{Example}
\newtheorem{rem}[thm]{Remark}
\newtheorem{defi}[thm]{Definition}
\newcommand{\real}{\mathbb{R}}
\begin{document}
\title{Convergence of the Fourth Moment and Infinite Divisibility}

\author{Octavio Arizmendi}

\address{Universit\"{a}t des Saarlandes, FR $6.1-$Mathematik. 66123 Saarbr\"{u}cken, Germany }
\email{arizmendi@math.uni-sb.de}
\thanks{Supported by DFG-Deutsche Forschungsgemeinschaft 
Project SP419/8-1.}
\date{\today}

\maketitle
\begin{abstract}
In this note we prove that, for infinitely divisible laws,  convergence of the fourth moment to 3 is sufficient to ensure convergence in law to the Gaussian distribution. Our results include infinitely divisible measures with respect to classical, free, Boolean and monotone convolution. A similar criterion is proved for compound Poissons with jump distribution supported on a finite number of atoms. In particular, this generalizes recent results of Nourdin and Poly. 
\end{abstract}

\section{Introduction and Statement of Results}

  In a seminal paper,  Nualart and Peccati \cite{NuPe} proved a convergence criterion for multiple integrals in a fixed chaos with respect to the classical
Brownian motion to the standard normal distribution $\mathcal{N}(0,1)$ which gives a drastic simplification for the so-called method of moments. More precisely, let $(W_t)_{t \geq0}$ be a standard Brownian motion. For
every square-integrable function $f$ on $\mathbb{R}^m_+$ we denote by $I^W_m(f)$ the $m$-th
multiple Wiener-It\^o stochastic integral of $f$ with respect to $W$. 

\begin{thm}[\cite{NuPe}] \label{T1} 
Let $\{X_n=I^W_m(f_n)\}_{n>0}$ be a sequence of multiple Wiener-It\^ o integrals in a fixed $m$-chaos with $ E[X_n^2] \rightarrow 1$ and denote $\mu_{X_n}$ the distribution of $X_n$. Then the following are equivalent
\begin {enumerate}
\item $ E[X_n^4] \rightarrow 3$
\item $\mu_{X_n}\rightarrow \mathcal{N}(0,1)$
\end{enumerate}

\end{thm}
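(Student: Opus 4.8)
The plan is to prove the two implications separately, with $(1)\Rightarrow(2)$ carrying essentially all of the content. Throughout I write $I_k = I^W_k$, denote by $\|\cdot\|$ the $L^2(\mathbb{R}^m_+)$ norm, and use the isometry relation $E[I_p(g)I_q(h)] = \delta_{p,q}\,p!\,\langle \widetilde g,\widetilde h\rangle$ together with the product formula for multiple Wiener--It\^o integrals,
\[
I_m(f)\,I_m(g)=\sum_{r=0}^{m} r!\binom{m}{r}^{2} I_{2m-2r}\bigl(f\widetilde{\otimes}_r g\bigr),
\]
where $\otimes_r$ is the order-$r$ contraction and $\widetilde{\,\cdot\,}$ denotes symmetrization.

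First I would dispose of $(2)\Rightarrow(1)$, which is the soft direction. On a fixed chaos all $L^p$ norms are equivalent (hypercontractivity), so $\sup_n E[|X_n|^{p}]<\infty$ for every $p$; in particular the family $\{X_n^{4}\}_n$ is uniformly integrable. Combining uniform integrability with the assumed convergence in law $\mu_{X_n}\to\mathcal N(0,1)$ and the second-moment normalization yields $E[X_n^{4}]\to E[N^{4}]=3$ for $N\sim\mathcal N(0,1)$.

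For $(1)\Rightarrow(2)$ the first step is the fourth-moment expansion. Applying the product formula to $f=g=f_n$ and then the isometry gives $E[X_n^2]=m!\,\|f_n\|^{2}$ and, after collecting terms, the key identity
\[
E[X_n^{4}]-3\bigl(E[X_n^{2}]\bigr)^{2}=\sum_{r=1}^{m-1} c_{m,r}\,\bigl\|f_n\widetilde{\otimes}_r f_n\bigr\|^{2},
\]
with strictly positive constants $c_{m,r}$. Since $E[X_n^2]\to 1$ forces $3\bigl(E[X_n^{2}]\bigr)^{2}\to 3$, hypothesis (1) is therefore \emph{equivalent} to the vanishing of every contraction,
\[
\bigl\|f_n\otimes_r f_n\bigr\|\longrightarrow 0,\qquad r=1,\dots,m-1,
\]
where one passes freely between $\|f_n\widetilde{\otimes}_r f_n\|$ and $\|f_n\otimes_r f_n\|$ because vanishing of one family is equivalent to vanishing of the other. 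The remaining task is to deduce asymptotic normality from this contraction condition. I would use the method of moments: by iterating the product formula, each moment $E[X_n^{p}]$ expands as a finite sum indexed by pairings and partitions of the $p$ copies of $f_n$, and a diagram (Wick) bookkeeping shows that the only contributions surviving the limit are the complete pairings in which every contraction has full order $m$ — these reproduce exactly the Gaussian moments $(2k-1)!!$ (odd moments $\to 0$) — while every diagram forcing a partial contraction of some order $r\in\{1,\dots,m-1\}$ is bounded above by a product of factors $\|f_n\otimes_r f_n\|$ and hence tends to $0$. Since $\mathcal N(0,1)$ is moment-determinate, convergence of all moments yields $\mu_{X_n}\to\mathcal N(0,1)$.

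The main obstacle is precisely the combinatorial estimate in the last step: controlling each non-Gaussian diagram in the $p$-th moment expansion by the contraction norms. This requires a careful diagram formalism and repeated applications of Cauchy--Schwarz to reduce every connected, non-pairing piece to a factor of the form $\|f_n\otimes_r f_n\|$ with $1\le r\le m-1$, and then uniform bounds (again from hypercontractivity) on the remaining factors so that these terms genuinely vanish. An alternative that shifts, rather than removes, this difficulty is to bypass the full moment analysis via the Nualart--Ortiz-Latorre criterion, establishing $\mathrm{Var}\bigl(\tfrac1m\|DX_n\|^{2}\bigr)\to 0$ and closing through Stein's method; there the technical burden is transferred to estimating the Malliavin derivative, but it is again expressed through the very same contraction norms $\|f_n\otimes_r f_n\|$.
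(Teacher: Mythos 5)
This theorem is quoted in the paper as background from Nualart--Peccati \cite{NuPe} and is never proved there, so there is no internal proof to compare against; the relevant comparison is with the original article. Your route is correct in outline but genuinely different from the one in \cite{NuPe}: Nualart and Peccati did not use the method of moments at all --- their proof represents the multiple integrals as Brownian martingales and applies the Dambis--Dubins--Schwarz time change, reducing everything to the convergence of a quadratic variation, which is where the fourth moment (equivalently, the vanishing of the contraction norms $\|f_n\otimes_r f_n\|$, $1\le r\le m-1$) enters. Your moments-and-diagrams argument is a known alternative (carried out rigorously, e.g., in Peccati--Taqqu's treatment of diagram formulae), and your third option via the Nualart--Ortiz-Latorre criterion and Stein's method is the now-standard modern proof. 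What the stochastic-calculus proof buys is that it avoids entirely the combinatorial estimate you correctly single out as the main obstacle --- bounding every non-pairing diagram in the $p$-th moment expansion by contraction norms via iterated Cauchy--Schwarz --- which in your route is the whole technical content and is only asserted, not executed; what your route buys is that it is self-contained at the level of the product formula and transparently parallels the free-probability proof of Theorem 1.2 in \cite{KNPS}, which is the version conceptually closest to this paper.

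Two smaller points of precision. First, the fourth-moment identity as you wrote it, with only symmetrized contractions $\|f_n\widetilde{\otimes}_r f_n\|^2$, is not quite the standard one: the exact identity reads $E[X_n^4]-3\bigl(E[X_n^2]\bigr)^2=\sum_{r=1}^{m-1}c_{m,r}\bigl(\|f_n\otimes_r f_n\|^2+c'_{m,r}\|f_n\widetilde{\otimes}_r f_n\|^2\bigr)$ with positive constants, i.e.\ it contains the \emph{unsymmetrized} norms as well; this is in fact what legitimizes your remark that one ``passes freely'' between the two families (the inequality $\|f\widetilde{\otimes}_r f\|\le\|f\otimes_r f\|$ alone gives only one direction). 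Second, in the soft implication $(2)\Rightarrow(1)$ you should note explicitly that hypercontractivity applies because the $X_n$ live in a \emph{fixed} chaos with $\sup_n E[X_n^2]<\infty$; with that said, your uniform integrability argument is exactly right. Overall: a sound and complete-in-outline proof strategy, differing from the cited original, with the diagram estimate left as the (standard but nontrivial) heavy lifting.
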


In free probability, the standard semicircle distribution  $\mathcal{S}(0,1)$  plays the role of the gaussian distribution. Recently, it was proved by Kemp et al. \cite{KNPS} that the Nualart-Peccati criterion also holds for
the free Brownian motion $(S_t)_{t\geq0}$ and its multiple Wigner integrals $I_m^S(f)$. 

\begin{thm}[\cite{KNPS}] \label{T2}
Let $\{X_n:=I_m^S(f_n)\}_{n>0}$ be a sequence of  multiple Wigner integrals in a fixed $m$-chaos with $ E[X_n^2] \rightarrow 1$ denote $\mu_{X_n}$ the distribution of $X_n$.  Then the following are equivalent 

\begin{enumerate}
\item $ E[X_n^4] \rightarrow 2$
 \item $\mu_{X_n}\rightarrow \mathcal{S}(0,1)$
\end{enumerate}
\end{thm}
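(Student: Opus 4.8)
The plan is to argue by the method of moments, exploiting that the standard semicircle law $\mathcal{S}(0,1)$ is compactly supported and hence determined by its moment sequence, whose even moments are the Catalan numbers $C_k$ (so that $C_1=1$ and $C_2=2$) and whose odd moments vanish. Two preliminary facts make this viable. First, every element of a fixed Wigner chaos has moments of all orders, and free hypercontractivity bounds the $L^p$ norms on the $m$-th chaos by a constant (depending only on $m$ and $p$) times the $L^2$ norm; since $\tau(X_n^2)=\|f_n\|_{L^2}^2\to 1$, the sequence $(X_n)$ is bounded in every $L^p$, so convergence in distribution is equivalent to convergence of all moments. In particular (2)$\Rightarrow$(1) is immediate, because $\mu_{X_n}\to\mathcal{S}(0,1)$ forces $\tau(X_n^4)\to C_2=2$. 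The substance of the theorem is the converse (1)$\Rightarrow$(2).

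For the converse I would first record the product formula for multiple Wigner integrals, which writes $I_m^S(f)\,I_m^S(g)=\sum_{p=0}^{m} I_{2m-2p}^S(f\frown_p g)$ in terms of the $p$-th contraction $f\frown_p g$, together with the trace rule that a multiple Wigner integral has vanishing trace unless its order is $0$. Applying this twice and taking the trace yields $\tau(X_n^2)=\|f_n\|_{L^2}^2$ and an identity for the fourth moment of the shape
\begin{equation*}
\tau(X_n^4)=2\,\|f_n\|_{L^2}^4+\sum_{p=1}^{m-1}\|f_n\frown_p f_n\|_{L^2}^2 .
\end{equation*}
Since the summands are nonnegative and $\|f_n\|_{L^2}^2\to 1$, this shows that hypothesis (1), namely $\tau(X_n^4)\to 2$, is \emph{equivalent} to the vanishing of every intermediate contraction,
\begin{equation*}
\|f_n\frown_p f_n\|_{L^2}\longrightarrow 0,\qquad p=1,\dots,m-1 .
\end{equation*}
This reduction is the engine of the argument: a single scalar quantity is shown to encode the smallness of a whole family of contraction norms.

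It then remains to promote the vanishing of these contractions to convergence of \emph{all} moments. I would expand a general $\tau(X_n^k)$ as a sum over the diagrams produced by iterating the product formula $k$ times, in which the non-crossing pair diagrams reproduce exactly the semicircular moments $C_{k/2}$ (and contribute nothing for odd $k$), while every remaining diagram carries at least one genuine contraction $f_n\frown_p f_n$ with $1\le p\le m-1$ hidden inside its integrand. Bounding each such remaining diagram by a product of $L^2$ norms via iterated Cauchy--Schwarz, one factor of which is some $\|f_n\frown_p f_n\|_{L^2}$, forces its contribution to $0$. Hence $\tau(X_n^k)$ converges to the $k$-th semicircular moment for every $k$, and by the method of moments $\mu_{X_n}\to\mathcal{S}(0,1)$, establishing (1)$\Rightarrow$(2). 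The hard part is precisely this last combinatorial estimate: one must verify that the family of contractions already controlled by the fourth moment genuinely dominates every non-semicircular diagram appearing in the arbitrarily high moments, which is where the non-crossing structure of free independence and a careful graph-theoretic reorganization of the contraction integrals do the real work.
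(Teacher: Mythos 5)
Your proposal is a correct outline, but note that the paper does not prove Theorem \ref{T2} at all: it is quoted verbatim from Kemp--Nourdin--Peccati--Speicher \cite{KNPS}, and what you have reconstructed is essentially the original KNPS argument (product formula for Wigner integrals, the fourth-moment identity $\tau(X_n^4)=2\|f_n\|_{L^2}^4+\sum_{p=1}^{m-1}\|f_n\frown_p f_n\|_{L^2}^2$ forcing all intermediate contractions to vanish, then a Wick/diagram expansion of higher moments in which every non-semicircular term is dominated via Cauchy--Schwarz by some contraction norm). That is the right, and essentially the only known, route for the full statement. It is instructive to see why the paper's own method cannot replace it: the author's technique (push the problem through the Boolean Bercovici--Pata bijection $\mathbb{B}$, apply the elementary Lemma \ref{lemma1} identifying the Bernoulli law from its fourth moment, and invoke weak continuity of $\mathbb{B}$) applies only to freely infinitely divisible laws, and the paper itself points out in a footnote that multiple Wigner integrals are in general \emph{not} freely infinitely divisible beyond the second chaos (e.g.\ $s^3-2s$); so the paper's machinery recovers Theorem \ref{T2} only in the first and second chaos (cf.\ Theorem \ref{poisson} and the Nourdin--Poly corollary), whereas your argument addresses arbitrary fixed $m$. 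Two small points to tighten in your sketch: you should require the kernels $f_n$ to be mirror-symmetric, since this is what makes $X_n$ self-adjoint, yields a probability distribution $\mu_{X_n}$ on $\mathbb{R}$, and is used in deriving the fourth-moment identity; and the $L^p$-boundedness you attribute to ``free hypercontractivity'' is more directly the Haagerup-type bound $\|I_m^S(f)\|\leq (m+1)\|f\|_{L^2}$ on a fixed chaos, which gives the uniform integrability needed both for $(2)\Rightarrow(1)$ and for the method of moments. The combinatorial step you defer --- verifying that every crossing or incompletely contracted diagram in $\tau(X_n^k)$ really absorbs a factor $\|f_n\frown_p f_n\|_{L^2}$ with $1\le p\le m-1$ --- is indeed the technical heart of \cite{KNPS} and would need to be carried out in full for a complete proof, as you acknowledge.
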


In this paper we prove analogous results to Theorem \ref{T1} and  Theorem \ref{T2} in the setting of infinitely divisible laws.  Let $ID(*)$ and $ID(\boxplus)$
denote the classes of probability measures which are infinitely divisible with respect to classical convolution  $*$  and free convolution $\boxplus$, respectively. 

\begin{thm}\label{T3} Let $\{\mu_n=\mu_{X_n}\}_{n>0}$ be a sequence of probability measures with variance $1$ and mean $0$ such that $\mu_n\in ID(*)$. If $ E[X_n^4] \rightarrow 3$
then $\mu_{X_n}\rightarrow \mathcal{N}(0,1)$.
\end{thm}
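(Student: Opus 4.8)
The plan is to reduce the statement to pointwise convergence of characteristic functions and to exploit the defining feature of classical infinite divisibility: once we pass to the L\'evy--Khintchine representation, the cumulants of order $\geq 3$ are literally moments of the L\'evy measure. Concretely, since each $\mu_n$ has mean $0$ and variance $1$, I would write its cumulant transform in the reduced (finite-variance) form
\[
\log \widehat{\mu_n}(t) = -\frac{a_n t^2}{2} + \int_{\mathbb{R}} \bigl(e^{itx} - 1 - itx\bigr)\, d\nu_n(x),
\]
where $a_n \geq 0$ is the Gaussian variance and $\nu_n$ is the L\'evy measure, which here satisfies $\int_{\mathbb{R}} x^2\, d\nu_n(x) < \infty$. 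Differentiating at $0$ gives $\kappa_2(\mu_n) = a_n + \int x^2\, d\nu_n = 1$ and $\kappa_4(\mu_n) = \int x^4\, d\nu_n$.

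The first real step is to translate the hypothesis into a statement about $\nu_n$. Using the moment--cumulant relation for a centered law, $E[X_n^4] = \kappa_4(\mu_n) + 3\,\kappa_2(\mu_n)^2 = \kappa_4(\mu_n) + 3$, so the assumption $E[X_n^4] \to 3$ is exactly $\int_{\mathbb{R}} x^4\, d\nu_n \to 0$. I would then interpolate: by Cauchy--Schwarz against $\nu_n$,
\[
\int_{\mathbb{R}} |x|^3\, d\nu_n \leq \Bigl(\int_{\mathbb{R}} x^2\, d\nu_n\Bigr)^{1/2}\Bigl(\int_{\mathbb{R}} x^4\, d\nu_n\Bigr)^{1/2} \leq \Bigl(\int_{\mathbb{R}} x^4\, d\nu_n\Bigr)^{1/2} \to 0,
\]
where the bound $\int x^2\, d\nu_n \leq 1$ comes from $a_n \geq 0$. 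Thus both the third absolute and fourth moments of $\nu_n$ vanish in the limit.

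To finish I would compare $\log \widehat{\mu_n}(t)$ with $-t^2/2$. Substituting $a_n = 1 - \int x^2\, d\nu_n$ gives
\[
\log \widehat{\mu_n}(t) + \frac{t^2}{2} = \int_{\mathbb{R}} \Bigl(e^{itx} - 1 - itx + \tfrac{t^2 x^2}{2}\Bigr)\, d\nu_n(x),
\]
and the elementary Taylor estimate $|e^{iu} - 1 - iu + u^2/2| \leq |u|^3/6$ with $u = tx$ bounds the integrand by $|t|^3 |x|^3/6$. Hence for each fixed $t$ the right-hand side is at most $\tfrac{|t|^3}{6}\int |x|^3\, d\nu_n \to 0$, so $\widehat{\mu_n}(t) \to e^{-t^2/2}$ and L\'evy's continuity theorem yields $\mu_n \to \mathcal{N}(0,1)$. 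The one point that needs care --- and which I regard as the conceptual heart rather than a technical obstacle --- is the identification $\kappa_4(\mu_n) = \int x^4\, d\nu_n$: it is precisely infinite divisibility that forces the fourth cumulant to be a single nonnegative moment of $\nu_n$, and this is what makes control of the fourth moment alone sufficient. The Gaussian part $a_n$ plays no adverse role, since it only helps by keeping $\int x^2\, d\nu_n$ bounded by $1$.
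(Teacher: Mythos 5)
Your proof is correct, but it takes a genuinely different route from the paper's. The paper proves the classical, free and monotone statements all at once by a soft argument: it pulls each $\mu_n$ back through the Boolean Bercovici--Pata bijection to a measure $\nu_n$ whose Boolean cumulants equal the cumulants of $\mu_n$, so that $\nu_n$ has mean $0$, variance $1$ and $m_4(\nu_n)\to 1$; the elementary Lemma \ref{lemma1} (variance of $X_n^2$ tending to $0$ forces convergence to the symmetric Bernoulli $\mathbf{b}$) then gives $\nu_n\to\mathbf{b}$, and the weak continuity of the bijections transports this to $\mu_n\to\mathcal{N}(0,1)$ (resp.\ $\mathcal{S}(0,1)$, $\mathcal{A}(0,1)$), since $\mathbf{b}$ maps to the corresponding ``gaussian''. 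You instead argue directly with the finite-variance (Kolmogorov) form of the L\'evy--Khintchine representation: infinite divisibility makes $\kappa_4(\mu_n)=\int x^4\,d\nu_n\geq 0$ a single nonnegative moment of the L\'evy measure, the hypothesis $E[X_n^4]\to 3$ forces it to vanish, Cauchy--Schwarz together with $\int x^2\,d\nu_n\leq 1$ kills $\int |x|^3\,d\nu_n$, and the third-order Taylor estimate yields $\log\widehat{\mu_n}(t)\to -t^2/2$ pointwise, so L\'evy's continuity theorem concludes. Both arguments exploit the same mechanism --- for ID laws the fourth cumulant is a single nonnegative quantity whose vanishing collapses the law to the gaussian --- but yours is elementary and self-contained for the classical case, needing only Fourier analysis, whereas the paper's buys uniformity: one lemma plus the (nontrivial, cited) continuity of $\Lambda$, $\mathbb{B}$ and $\Lambda^{\rhd}$ settles Theorems \ref{T3}, \ref{T4} and \ref{T7} simultaneously, and your Fourier-analytic scheme would not transfer to the free or monotone settings without replacing the characteristic function by the Voiculescu or $A_\mu$ transforms. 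One small point worth making explicit in your write-up: the hypothesis implicitly gives $E[X_n^4]<\infty$, which for infinitely divisible laws is equivalent to $\int x^4\,d\nu_n<\infty$, and this is what licenses the identity $\kappa_4(\mu_n)=\int x^4\,d\nu_n$ and the differentiation under the integral sign; this is standard and does not affect correctness.
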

\begin{thm}\label{T4} Let $\{\mu_n=\mu_{X_n}\}_{n>0}$ be a sequence of probability measures with variance $1$ and mean $0$ such that $\mu_n\in ID(\boxplus)$. If $ E[X_n^4] \rightarrow 2$
then $\mu_{X_n}\rightarrow \mathcal{S}(0,1)$. 
\end{thm}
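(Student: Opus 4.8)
The plan is to deduce Theorem \ref{T4} from the already-established classical statement, Theorem \ref{T3}, by transporting the problem through the Bercovici--Pata bijection $\Lambda\colon ID(*) \to ID(\boxplus)$. Recall that $\Lambda$ is a homeomorphism for weak convergence, that it is constructed so as to preserve the L\'evy--Khintchine triplet $(\gamma, a, \nu)$, and---crucially for us---that it carries the classical cumulants of a measure $\rho \in ID(*)$ onto the free cumulants of $\Lambda(\rho)$; in particular $\Lambda(\mathcal{N}(0,1)) = \mathcal{S}(0,1)$. So if I set $\tilde{\mu}_n := \Lambda^{-1}(\mu_n) \in ID(*)$, then the $k$-th free cumulant $\kappa_k(\mu_n)$ equals the $k$-th classical cumulant $c_k(\tilde{\mu}_n)$ for every $k$.

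First I would rewrite the fourth-moment hypothesis as a statement about the fourth free cumulant. Since $\mu_n$ has mean $0$ (so $\kappa_1(\mu_n)=0$) and variance $1$ (so $\kappa_2(\mu_n)=1$), the free moment--cumulant formula over non-crossing partitions gives $E[X_n^4] = \kappa_4(\mu_n) + 2\kappa_2(\mu_n)^2 = \kappa_4(\mu_n) + 2$, because the only non-crossing partitions of $\{1,2,3,4\}$ with all blocks of size at least two are the full block and the two non-crossing pairings. Hence $E[X_n^4] \to 2$ is equivalent to $\kappa_4(\mu_n) \to 0$.

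Next I transport this to $\tilde{\mu}_n$. By cumulant preservation, $\tilde{\mu}_n$ has classical mean $c_1 = 0$, variance $c_2 = 1$, finite fourth moment (since the cumulants up to order four are finite), and $c_4(\tilde{\mu}_n) = \kappa_4(\mu_n) \to 0$. The classical moment--cumulant relation (with $c_1 = 0$) then gives that the fourth moment of $\tilde{\mu}_n$ equals $c_4(\tilde{\mu}_n) + 3c_2(\tilde{\mu}_n)^2 = c_4(\tilde{\mu}_n) + 3 \to 3$. Thus $\tilde{\mu}_n \in ID(*)$ meets every hypothesis of Theorem \ref{T3}, which yields $\tilde{\mu}_n \to \mathcal{N}(0,1)$ weakly. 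Invoking the weak continuity of $\Lambda$ together with $\Lambda(\mathcal{N}(0,1)) = \mathcal{S}(0,1)$, I conclude $\mu_n = \Lambda(\tilde{\mu}_n) \to \mathcal{S}(0,1)$, as asserted.

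I expect the only delicate point to be the bookkeeping that justifies $c_4(\tilde{\mu}_n) = \kappa_4(\mu_n)$ as an identity between honest (finite) cumulants rather than merely formal ones; this is exactly where the finiteness of the fourth moment of $\mu_n$ is used. A self-contained alternative, avoiding the bijection, is to argue directly from the free L\'evy--Khintchine representation, in which $\kappa_2(\mu_n) = a_n + \int t^2\,\nu_n(dt) = 1$ and $\kappa_4(\mu_n) = \int t^4\,\nu_n(dt) \to 0$, and then feed the parameters into the continuity theorem for free convolution (the free analogue of Gnedenko's theorem, due to Bercovici--Pata). The main obstacle in that route is that $\int t^2\,\nu_n(dt)$ need not vanish---the free L\'evy mass may concentrate near the origin---so one cannot simply discard $\nu_n$; instead the estimate $\sigma_n(\{|t| \ge \varepsilon\}) \le \varepsilon^{-2}\int t^4\,\nu_n(dt) \to 0$ for the finite measure $\sigma_n = a_n\delta_0 + \tfrac{t^2}{1+t^2}\nu_n$ shows that this near-origin mass is reabsorbed into the semicircular part, forcing $\sigma_n \to \delta_0$ and $\gamma_n \to 0$, hence $\mu_n \to \mathcal{S}(0,1)$.
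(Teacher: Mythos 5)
Your proposal is correct, but it runs the reduction in the opposite direction from the paper. The paper's pivot is the \emph{Boolean} world: it writes $\mu_n=\mathbb{B}(\nu_n)$, uses that the free cumulants of $\mu_n$ are the Boolean cumulants of $\nu_n$ (so $\nu_n$ has mean $0$, variance $1$, and fourth moment tending to $1$), invokes the elementary $L^2$ argument of Lemma \ref{lemma1} ($Y_n=X_n^2$ has vanishing variance, forcing the symmetric Bernoulli limit $\mathbf{b}$), and concludes by weak continuity of $\mathbb{B}$ since $\mathbb{B}(\mathbf{b})=\mathcal{S}(0,1)$. You instead transport to the classical world via $\Lambda^{-1}$ and quote Theorem \ref{T3}; this is non-circular within the paper (the proof of part (1) of Theorem \ref{T9} does not use part (2)), but be aware that the paper proves Theorem \ref{T3} by the very same Boolean pivot, so your main route only stands on an independent footing if Theorem \ref{T3} is granted a classical proof --- which the Kolmogorov canonical representation provides, and which your closing sketch is precisely the free analogue of. Your cumulant bookkeeping is right ($m_4=\kappa_4+2\kappa_2^2$ over non-crossing partitions with $\kappa_1=0$, versus $m_4=c_4+3c_2^2$ with $c_1=0$), and you correctly flag the one genuine subtlety: that $c_4(\Lambda^{-1}(\mu_n))=\kappa_4(\mu_n)$ as honest finite cumulants, which needs the standard moment-transfer results for $\Lambda$ (Barndorff-Nielsen--Thorbj{\o}rnsen). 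Your alternative direct argument is also sound and is arguably the most self-contained of the three: in the paper's parametrization $\kappa_2=\int_{\mathbb{R}}(1+t^2)\,\sigma_n(dt)=1$ and $\kappa_4=\int_{\mathbb{R}}t^2(1+t^2)\,\sigma_n(dt)\to 0$, so your Chebyshev-type bound forces $\sigma_n\to\delta_0$ with total mass tending to $1$ and $\gamma_n\to 0$, and the Bercovici--Pata continuity theorem for generating pairs gives $\mu_n\to\rho_{\boxplus}^{0,\delta_0}=\mathcal{S}(0,1)$. In comparison: the paper's Boolean lemma is elementary and handles all four convolutions uniformly, needing only weak continuity of the bijections; your direct route requires the deeper continuity theorem for the L\'evy--Khintchine correspondence but exposes the mechanism (the fourth cumulant kills the L\'evy measure, with near-origin mass reabsorbed into the semicircular part), and it generalizes naturally to other target laws in the spirit of Theorem \ref{poisson2}.
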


To complete the picture we show that the monotone probability of Muraki also fits in our framework, namely, Theorems \ref{T3} and \ref{T4} are also true in the monotone case.
\begin{thm}\label{T7} Let $\{\mu_n=\mu_{X_n}\}_{n>0}$ be a sequence of $\rhd$-infinitely divisible probability measures with common variance $1$ and mean $0$ and denote by $\mathcal{A}(0,1)$ the arcsine distribution with  mean zero and variance $1$. If $ E[X_n^4] \rightarrow 1.5$
then $\mu_{X_n}\rightarrow \mathcal{A}(0,1)$.
\end{thm}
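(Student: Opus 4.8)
The plan is to reduce the statement to the analytic Lévy--Khintchine description of monotone infinite divisibility and then run the same moment--concentration argument used for Theorems \ref{T3} and \ref{T4}. Recall that a measure $\mu \in ID(\rhd)$ embeds in a weakly continuous composition semigroup $(\mu_t)_{t \geq 0}$, with $\mu_0 = \delta_0$ and $\mu_{s+t} = \mu_s \rhd \mu_t$, whose reciprocal Cauchy transforms $F_t = 1/G_{\mu_t}$ satisfy the flow equation $\partial_t F_t(z) = A(F_t(z))$, $F_0 = \mathrm{id}$, with generator
\[
A(z) = b + \int_{\mathbb{R}} \frac{1+xz}{x-z}\,d\rho(x), \qquad b \in \mathbb{R},\ \rho \geq 0 \text{ finite}.
\]
I would first record that the arcsine law $\mathcal{A}(0,1)$ is exactly the time-one marginal of the generator $A_\infty(z) = -1/z$: solving $\partial_t F_t = -1/F_t$ gives $F_t(z) = \sqrt{z^2 - 2t}$, so $G_{\mu_1}(z) = (z^2-2)^{-1/2}$, the Cauchy transform of $\mathcal{A}(0,1)$. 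The target therefore becomes to prove $A_n \to A_\infty$.

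Next I would translate the hypotheses into conditions on $(b_n,\rho_n)$. Expanding $A$ at infinity gives $A(z) = -\sum_{k\ge1}\beta_k z^{1-k}$ with $\beta_1 = \int x\,d\rho - b$, $\beta_2 = \int(1+x^2)\,d\rho$, and $\beta_k = \int(x^{k-2}+x^k)\,d\rho$ for $k \ge 2$. Since mean and variance are additive under $\rhd$, they are linear in $t$, so the normalization mean $=0$, variance $=1$ forces $\beta_1 = 0$ and $\beta_2 = 1$, i.e.\ $b = \int x\,d\rho$ and $\int(1+x^2)\,d\rho = 1$. To reach the fourth moment I pass to the Cauchy transform, where the flow reads $\partial_t G_t = \sum_{k\ge2}\beta_k G_t^{k+1}$ with $G_0 = 1/z$; writing $G_t(z) = \sum_{j\ge0} m_j(t)\,z^{-j-1}$ and matching coefficients (using $m_1 \equiv 0$) yields $\dot m_2 = \beta_2$ and $\dot m_4 = 3\beta_2 m_2 + \beta_4$, whence $m_4(\mu) = \tfrac32\beta_2^2 + \beta_4 = \tfrac32 + \int(x^2+x^4)\,d\rho$.

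With this identity the convergence of the Lévy data is immediate. Since $x^2 + x^4 \ge 0$, the hypothesis $E[X_n^4] \to 3/2$ gives $\int(x^2+x^4)\,d\rho_n \to 0$, hence both $\int x^2\,d\rho_n \to 0$ and $\int x^4\,d\rho_n \to 0$. Combined with the variance identity $\int x^2\,d\rho_n + \rho_n(\mathbb{R}) = 1$ this forces $\rho_n(\mathbb{R}) \to 1$, and together with $\int x^2\,d\rho_n \to 0$ it gives $\rho_n \to \delta_0$ weakly and $b_n = \int x\,d\rho_n \to 0$ (by Cauchy--Schwarz). Because $x \mapsto (1+xz)/(x-z)$ is bounded and continuous for each fixed $z \in \mathbb{C}^+$, this yields $A_n(z) \to -1/z = A_\infty(z)$ locally uniformly on $\mathbb{C}^+$.

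The final and most delicate step is to pass from convergence of generators to weak convergence of the measures. I would invoke the continuity theorem for monotone infinite divisibility (Muraki), which asserts that the correspondence between the Lévy data $(b_n,\rho_n)$ and $\mu_n \in ID(\rhd)$ is continuous for weak convergence; alternatively one argues directly that locally uniform convergence $A_n \to A_\infty$ forces locally uniform convergence of the time-one flows $F_1^{(n)} \to F_1^{\infty}$ by continuous dependence of an ODE on its right-hand side, so that $G_{\mu_n} \to G_{\mathcal{A}(0,1)}$ and $\mu_n \to \mathcal{A}(0,1)$ by the Stieltjes continuity theorem. I expect the main obstacle to be precisely this ODE-stability estimate in $\mathbb{C}^+$: one must control the flow uniformly up to $t=1$ from only locally uniform control of the generators, which calls for a normal-families argument for the Nevanlinna functions $F_t^{(n)}$ and some care near the real boundary. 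Everything else is a finite coefficient computation.
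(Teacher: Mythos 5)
Your argument is correct, but it takes a genuinely different route from the paper. The paper's proof (part (3) of Theorem \ref{T9}) never touches the monotone L\'evy--Khintchine data: it transports the problem through the Bercovici--Pata bijections, writing $\mu_n$ as the image under $\Lambda^{\rhd}\circ\Lambda\circ\mathbb{B}^{-1}$-type compositions of a measure $\nu_n$ whose Boolean cumulants equal the monotone cumulants of $\mu_n$ (Remark \ref{rem2}); since $h_1=0$, $h_2=1$ and $m_4(\mu_n)=h_4(\mu_n)+\tfrac32$, the hypothesis gives $m_4(\nu_n)=r_4+r_2^2\to 1$, the elementary $L^2$-concentration Lemma \ref{lemma1} yields $\nu_n\to\mathbf{b}$, and weak continuity of the bijections (in particular Hasebe's continuity of $\Lambda^{\rhd}$, \cite{Ha}) transports this to $\mu_n\to\mathcal{A}(0,1)$. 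You instead compute directly with the generator $A_n$: your coefficient identities ($\beta_1=\int x\,d\rho-b$, $\beta_2=\int(1+x^2)\,d\rho$, $m_4=\tfrac32\beta_2^2+\beta_4$ with $\beta_4=\int(x^2+x^4)\,d\rho\ge 0$) are all correct and match the monotone moment--cumulant formula, and the deduction $(b_n,\rho_n)\to(0,\delta_0)$, hence $A_n(z)\to -1/z$ locally uniformly, is sound (including the Chebyshev/Cauchy--Schwarz steps). What your approach buys is a self-contained, quantitative picture of \emph{why} the fourth moment controls everything --- it is literally the total mass of $x^2+x^4$ against the L\'evy measure --- whereas the paper's proof buys brevity and uniformity across all four convolutions at the price of invoking three continuity theorems as black boxes. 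The one step you flag as delicate, passing from $A_n\to A_\infty$ to $\mu_n\to\mathcal{A}(0,1)$, is real but fillable in two ways: either carry out the ODE-stability/normal-families argument you sketch (the flows stay in $\{\Im w\ge\Im z\}$, where the $A_n$ are locally uniformly bounded by $|b_n|+\bigl(|z|+(1+|z|^2)/\Im z\bigr)\rho_n(\mathbb{R})$, so Gr\"onwall applies on $[0,1]$), or --- more in the spirit of the paper --- observe that your conclusion $(b_n,\rho_n)\to(0,\delta_0)$ together with the classical L\'evy--Khintchine continuity theorem and the continuity of $\Lambda^{\rhd}$ from \cite{Ha} gives the result at once, since the classical measure with generating pair $(0,\delta_0)$ is $\mathcal{N}(0,1)$ and $\Lambda^{\rhd}(\mathcal{N}(0,1))=\mathcal{A}(0,1)$; at that point your proof and the paper's converge on the same external ingredient. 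A minor rigor point, shared with the paper: one should note that finiteness of the fourth moment of $\mu_n$ guarantees $\int x^4\,d\rho_n<\infty$ and validates the cumulant identity $h_4=\int(x^2+x^4)\,d\rho_n$, the monotone analogue of the classical fact relating moments of an infinitely divisible law to moments of its L\'evy measure.
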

Moreover, we can extend our results to compound Poisson distributions whose L\'evy measure has finite support. We only state the free version for the sake of clarity.
\begin{thm}\label{poisson} Let $\mu_{X_n}\in ID(\boxplus)$ be random variables and denote by $\pi_\boxplus(\lambda,\nu)$ the free compound poisson measure with  rate $\lambda$ and jump distribution $\nu:=\sum_{i=1}^k a_i\delta_{b_i}.$  If $E[X_n^k]\rightarrow m_k(\pi(\lambda))$  for 
$ i=1,\dots,2k+2$  then $\mu_{X_n}\rightarrow \pi_\boxplus(\lambda,\nu).$
\end{thm}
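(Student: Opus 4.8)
The plan is to exploit the free L\'evy--Khintchine representation in order to convert the statement into a finite moment problem for the free Kolmogorov measure. Recall that every $\mu \in ID(\boxplus)$ with finite moments is encoded by a pair $(\eta,\sigma)$, where $\eta \in \mathbb{R}$ is a drift and $\sigma$ is a finite positive measure (the free Kolmogorov measure), through the free cumulant identities
\begin{equation*}
\kappa_1(\mu) = \eta, \qquad \kappa_m(\mu) = \int_{\mathbb{R}} x^{\,m-2}\, d\sigma(x) \quad (m \ge 2).
\end{equation*}
For the target $\pi_\boxplus(\lambda,\nu)$ one has $\kappa_m = \lambda \int x^m\, d\nu = \lambda \sum_{i=1}^k a_i b_i^m$, so its Kolmogorov measure is the finitely supported measure $\sigma = \lambda \sum_{i=1}^k a_i b_i^2\, \delta_{b_i}$, carried by at most $k$ points. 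The two tools I would invoke are: (i) the moment--cumulant formula, a triangular and hence invertible change of variables, so that convergence of $E[X_n^j]$ for $j=1,\dots,2k+2$ is equivalent to convergence of the free cumulants $\kappa_j(\mu_n)$ for $j=1,\dots,2k+2$; and (ii) the continuity theorem for free infinite divisibility, which states that for $\mu_n,\mu \in ID(\boxplus)$ one has $\mu_n \to \mu$ weakly if and only if $\eta_n \to \eta$ and $\sigma_n \to \sigma$ weakly as finite measures.

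First I would translate the hypothesis. By (i), the assumption $E[X_n^j] \to m_j(\pi_\boxplus(\lambda,\nu))$ for $j \le 2k+2$ gives $\kappa_j(\mu_n) \to \kappa_j(\pi_\boxplus(\lambda,\nu))$ for $j \le 2k+2$. Reading this off the Kolmogorov data, it amounts to $\eta_n \to \eta$ together with
\begin{equation*}
\int_{\mathbb{R}} x^{\,\ell}\, d\sigma_n(x) \longrightarrow \int_{\mathbb{R}} x^{\,\ell}\, d\sigma(x), \qquad \ell = 0,1,\dots,2k,
\end{equation*}
since $\kappa_m$ encodes the $(m-2)$-th moment of $\sigma_n$. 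Thus the whole problem reduces to showing that convergence of the first $2k+1$ moments of the positive measures $\sigma_n$ to those of the (at most) $k$-atomic measure $\sigma$ forces $\sigma_n \to \sigma$ weakly; the continuity theorem (ii) then delivers $\mu_{X_n} \to \pi_\boxplus(\lambda,\nu)$.

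The heart of the argument, and the step I expect to be the main obstacle, is exactly this moment-determinacy claim, where finitely many moments suffice precisely because the limiting L\'evy measure has few atoms. The uniform bound on $\int x^{2k}\, d\sigma_n$, together with convergence of the masses $\sigma_n(\mathbb{R}) = \int x^0\, d\sigma_n$, gives via Chebyshev a tight family $\{\sigma_n\}$. Let $\tau$ be the weak limit of an arbitrary subsequence. Since the $2k$-th moments stay bounded, the monomials $x^\ell$ with $\ell \le 2k-1$ are uniformly integrable, so $\tau$ shares the moments of $\sigma$ up to order $2k-1$, while Fatou's lemma yields $\int x^{2k}\, d\tau \le \lim \int x^{2k}\, d\sigma_n = \int x^{2k}\, d\sigma$. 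Setting $q(x) = \prod_{i:\,b_i \ne 0}(x-b_i)$, a polynomial of degree at most $k$ that vanishes on $\mathrm{supp}(\sigma)$, and expanding $q^2$ (of degree at most $2k$ with positive leading coefficient), these relations give
\begin{equation*}
0 \le \int q^2\, d\tau \le \int q^2\, d\sigma = 0 ,
\end{equation*}
because all moments below the top agree and the top moment only decreases. Hence $\tau$ is supported on the zero set of $q$, that is, on the atoms of $\sigma$.

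It remains to pin down the weights. Both $\tau$ and $\sigma$ are now carried by the same finite set $\{b_i : b_i \ne 0\}$ and agree on the moments $m_0,\dots,m_{k-1}$ (indeed far beyond), so the invertible Vandermonde system in the atom weights forces $\tau = \sigma$. Since every subsequential limit equals $\sigma$ and the family is tight, $\sigma_n \to \sigma$ weakly as finite measures. Combined with $\eta_n \to \eta$, the continuity theorem for $ID(\boxplus)$ completes the proof. I would expect the determinacy step to be the only delicate point; the translations in (i) and the appeal to (ii) are routine once the free Kolmogorov parametrization is in place, and the calibration "$j=1,\dots,2k+2$" is exactly what is needed to control $\sigma_n$ up to its $2k$-th moment.
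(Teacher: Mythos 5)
Your proof is correct, but it takes a genuinely different route from the paper's. The paper proves this statement as a special case of Theorem \ref{poisson2} by transporting everything to the Boolean world: writing $\mu_n=\mathbb{B}(\nu_n)$ with $\mathbb{B}$ the Boolean-to-free Bercovici--Pata bijection, it observes that the Boolean measure $\rho_\uplus^{\gamma,\sigma}$ with $\sigma=\sum_{i=1}^k a_i\delta_{b_i}$ is itself purely atomic with at most $k+1$ atoms (its self-energy is rational by the Boolean L\'evy--Khintchine formula), so its Jacobi continued fraction terminates; convergence of the first $2k+2$ moments is then equivalent to convergence of the relevant Jacobi parameters, Lemma \ref{lemma2} (a terminating continued-fraction argument using boundedness of Cauchy transforms on $\{z:\Im(z)\geq 1\}$) gives weak convergence on the Boolean side, and the weak continuity of $\mathbb{B}$ transfers it to the free side. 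You instead stay entirely on the free side: you convert the moment hypothesis into convergence of the moments of order $0,\dots,2k$ of the free Kolmogorov measures $\sigma_n$, prove a finite-moment determinacy lemma for positive measures converging to an at most $k$-atomic limit (tightness via Chebyshev, uniform integrability of $x^\ell$ for $\ell\leq 2k-1$, Fatou at the top order, the positivity trick $0\leq\int q^2\,d\tau\leq\int q^2\,d\sigma=0$ with $q$ vanishing on the atoms, then a Vandermonde system for the weights --- this step is sound, including the case of an atom of $\nu$ at $0$, since $q(0)\neq 0$), and conclude by a Gnedenko-type continuity theorem for the free L\'evy--Khintchine representation. What each approach buys: the paper's Boolean detour makes the same argument work uniformly for all four convolutions $\circledast\in\{*,\uplus,\boxplus,\rhd\}$ and pins down convergence of the infinitely divisible laws themselves (which are atomic on the Boolean side), whereas your argument is self-contained measure theory on the L\'evy data and makes the calibration between $2k+2$ moments of $\mu_n$ and $2k$ moments of $\sigma_n$ transparent, but it is tied to the free case and quietly uses two standard facts that deserve a citation or verification: first, that finiteness of the $(2k+2)$-nd moment of a $\boxplus$-infinitely divisible law forces finiteness of the $2k$-th moment of its Kolmogorov measure, with $\kappa_m=\int x^{m-2}\,d\sigma$ holding as a convergent integral; second, the continuity theorem in the Kolmogorov parametrization, of which you only need the ``if'' direction --- this follows from the Bercovici--Voiculescu continuity of the $(\gamma,\sigma)$-representation, since the compactified generating measure is obtained from the Kolmogorov measure by multiplying by the bounded continuous density $(1+t^2)^{-1}$, a role played in the paper by the quoted weak continuity of the Bercovici--Pata bijections.
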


We want to emphasize that our approach relies on a third notion of non commutative independence (Boolean independence) and the so-called Bercovici-Pata bijections, $\Lambda$, $\Lambda^\rhd$ and $\mathbb{B}$  
(see Section 2). This  gives another example on how this third notion of independence sometimes regarded as uninteresting because of its simplicity can provide a better understanding in other notions of independence.

Some natural questions arise from the theorems above. What is the relation between Theorems \ref{T1} and \ref{T2}, and Theorems \ref{T3} and \ref{T4}?
Multiple integrals are in general not infinitely divisible\footnote{A counterexample for the free case is given by the third Chebychev polynomial of a semicircle,
i.e., $x= s^3-2s$.}. 
However, this is true in the first or second chaos.
In particular, from Theorem \ref{poisson} we may recover Theorem 4.3 in Nourdin and Poly \cite{NoPo}.  Another interesting question coming from Theorem \ref{T7} is if Theorem \ref{T1} is also valid for multiple integrals with respect to monotone Brownian motion; to the knowledge of the author this is still an open question.

\section{Preliminaries}

\subsection{The Cauchy Transfom }

We denote by $\mathcal{M}$ the set of Borel probability measures on $\real$. The upper half-plane and the lower half-plane are respectively denoted as $\mathbb{C}^+$ and $\mathbb{C}^-$. 

Let $G_\mu(z) = \int_{\real}\frac{\mu(dx)}{z-x}$ $(z \in \mathbb{C}^+)$ be the Cauchy transform of $\mu \in \mathcal{M}$.

The relation between weak convergence and the Cauchy Transform is the following (see e.g. \cite{BaiSil}).

\begin{prop}
\label{WCCT}Let $\mu _{1}$ and $\mu _{2}$ be two probability measures on $
\mathbb{R}$ and  
\begin{equation*}
d(\mu _{1},\mu _{2})=\sup \left\{ \left\vert G_{\mu _{1}}(z)-G_{\mu
_{2}}(z)\right\vert ;\Im(z)\geq 1\right\} .
\end{equation*}
Then $d$ is a distance which defines a metric for the weak topology of probability
measures. In particular,  $G_\mu(z$) is bounded in $\{z:\Im(z)\geq 1\}$.\end{prop}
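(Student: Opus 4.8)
The plan is to treat the three assertions separately, viewing the metrization of the weak topology as the real content. Boundedness is immediate: for $\Im(z)\ge 1$ and $x\in\real$ one has $|z-x|\ge\Im(z)\ge 1$, so
\[
|G_\mu(z)|\le\int_\real\frac{\mu(dx)}{|z-x|}\le\frac{1}{\Im(z)}\le 1,
\]
which proves the ``in particular'' claim and shows the supremum defining $d$ is finite. Symmetry, non-negativity and the triangle inequality for $d$ follow at once from the corresponding properties of the modulus together with the subadditivity of the supremum. The only nontrivial metric axiom is definiteness: if $d(\mu_1,\mu_2)=0$ then $G_{\mu_1}$ and $G_{\mu_2}$ agree on the open set $\{\Im(z)>1\}$; both being holomorphic on $\mathbb{C}^+$, they agree on all of $\mathbb{C}^+$ by the identity theorem, and the Stieltjes inversion formula then recovers $\mu_1=\mu_2$.

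For the metrization I would prove both implications. First, suppose $\mu_n\to\mu$ weakly and show $d(\mu_n,\mu)\to 0$. For each fixed $z$ with $\Im(z)\ge 1$ the kernel $x\mapsto (z-x)^{-1}$ is bounded and continuous, so weak convergence gives $G_{\mu_n}(z)\to G_\mu(z)$ pointwise; the task is to upgrade this to \emph{uniform} convergence on the unbounded region $\{\Im(z)\ge 1\}$, and this uniformization is the main obstacle. I would handle it in two regimes. By tightness of the weakly convergent family, given $\varepsilon>0$ choose $R$ with $\mu_n(\{|x|>R\}),\mu(\{|x|>R\})<\varepsilon$ for all $n$; splitting the defining integral at $|x|=R$ and using $|z-x|\ge\Im(z)\ge 1$ on the tail and $|z-x|\ge|z|/2$ on $[-R,R]$ when $|z|\ge 2R$ shows that $|G_{\mu_n}(z)|$ and $|G_\mu(z)|$ are uniformly small for $|z|$ large, so their difference is controlled there. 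On the complementary compact part of $\{\Im(z)\ge 1\}$ the functions $G_{\mu_n}$ are uniformly bounded by $1$ and uniformly Lipschitz, since $|\partial_z(z-x)^{-1}|=|z-x|^{-2}\le 1$, hence equicontinuous; Arzel\`a--Ascoli then promotes the pointwise limit to a uniform one. Combining the two regimes yields $d(\mu_n,\mu)\to 0$.

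Conversely, suppose $d(\mu_n,\mu)\to 0$. Then $G_{\mu_n}\to G_\mu$ uniformly on $\{\Im(z)\ge 1\}$, hence pointwise on an open subset of $\mathbb{C}^+$, and since the $G_{\mu_n}$ are holomorphic and locally bounded on $\mathbb{C}^+$, Vitali's theorem extends this to pointwise convergence on all of $\mathbb{C}^+$. To conclude $\mu_n\to\mu$ weakly I would establish the continuity theorem for Cauchy transforms: by Helly's selection principle every subsequence of $\{\mu_n\}$ has a vaguely convergent sub-subsequence with sub-probability limit $\nu$, and vague convergence identifies $G_\nu$ with the pointwise limit $G_\mu$ off the real axis, so $G_\nu=G_\mu$ on $\mathbb{C}^+$. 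The normalization $\lim_{y\to\infty} iy\,G_\mu(iy)=1$, valid because $\mu$ is a probability measure, forces $\nu$ to carry full mass, whence $\nu=\mu$ by Stieltjes inversion and the vague convergence is in fact weak. Since every subsequence has a sub-subsequence converging weakly to the same limit $\mu$, the whole sequence converges weakly to $\mu$. The delicate point in this direction is ruling out escape of mass to infinity, which is precisely what the normalization at $i\infty$ supplies.
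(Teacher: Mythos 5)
Your proof is correct, but note that the paper contains no proof of this proposition to compare against: it is quoted as a known result with a pointer to Bai and Silverstein \cite{BaiSil}, so your write-up is a self-contained substitute rather than a variant of an internal argument. On its own merits it holds up. Boundedness and the metric axioms are fine, with definiteness correctly settled by the identity theorem plus Stieltjes inversion. In the direction ``weak convergence implies $d$-convergence,'' the genuine difficulty is, as you say, uniformity over the unbounded strip, and your two-regime argument closes it: tightness of a weakly convergent sequence gives a uniform tail bound, the estimate $|z-x|\geq |z|-R\geq |z|/2$ for $|x|\leq R$, $|z|\geq 2R$ makes both transforms uniformly small at large $|z|$, and on the compact remainder the uniform bounds $|G_{\mu_n}|\leq 1$ and $|G_{\mu_n}'(z)|\leq (\Im z)^{-2}\leq 1$ yield the equicontinuity that upgrades pointwise to uniform convergence. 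In the converse direction, the Helly subsequence argument is right, and the normalization $iy\,G_\mu(iy)\to \mu(\mathbb{R})$ is exactly what excludes loss of mass; one small simplification is that Vitali is dispensable, since pointwise convergence on $\{\Im z\geq 1\}$ already identifies $G_\nu=G_\mu$ there (the kernel $x\mapsto (z-x)^{-1}$ lies in $C_0(\mathbb{R})$, so vague convergence applies), and the identity theorem extends the equality to $\mathbb{C}^+$. The only step you leave tacit is the passage from ``same convergent sequences'' to ``same topology'': this is legitimate because the weak topology on probability measures on $\mathbb{R}$ is itself metrizable (e.g.\ by the L\'evy--Prokhorov metric), so both topologies are determined by their convergent sequences, but it deserves one explicit sentence.
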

In other words, a sequence of probability measures $\left\{ \mu
_{n}\right\} _{n\geq 1}$ on $\mathbb{R}$ converges weakly to a probability
measure $\mu $ on $\mathbb{R}$ if and only if for all $z$ with $\Im(z)\geq 1$ we have
\begin{equation*}
\lim_{n\rightarrow \infty }G_{\mu _{n}}(z)=G_{\mu }(z).
\end{equation*}

\subsection{The Jacobi Parameters}

Let $\mu$ be a probability measure with all the moments. The Jacobi parameters  $\gamma _{m}=\gamma _{m}(\mu )\geq 0,\beta
_{m}=\beta _{m}(\mu )\in\mathbb{R}$, are defined by the recursion 
\begin{equation*}
xP_{m}(x)=P_{m+1}(x)+\beta _{m}P_{m}(x)+\gamma _{m-1}P_{m-1}(x),
\end{equation*}
where the polynomials  $P_{-1}(x)=0,$ $P_{0}(x)=1$ and $(P_{m})_{m\geq 0}$ is a sequence of orthogonal monic polynomials with respect to $\mu $, that is,
\begin{equation*}
\int_{\mathbb{R}}P_{m}(x)P_{n}(x)\mu (dx)=0\text{ \  \  \  \ if }m\neq n.
\end{equation*}
A measure $\mu$ is supported on $m$ points iff $\gamma_{m-1} = 0$ and $\gamma_n> 0$ for $n = 0,\dots , m-2$.

The Cauchy transform may be expressed as a continued fraction in terms of the Jacobi parameters, as follows.
\begin{equation*}
G_{\mu }(z)=\int_{-\infty }^{\infty }\frac{1}{z-t}\mu (dt)=\frac{1}{z-\beta
_{0}-\dfrac{\gamma _{0}}{z-\beta _{1}-\dfrac{\gamma _{1}}{z-\beta _{2}-\cdots}}}
\end{equation*}

In the case when $\mu$ has $2n+2$-moments we can still make an orthogonalization procedure until the level $n$. In this case the Cauchy transform has the form

\begin{equation}\label{expansionjacobi}
G_{\mu }(z)=\frac{1}{z-\beta
_{0}-\dfrac{\gamma _{0}}{z-\beta _{1}-\dfrac{\gamma _{1}}{~~~\dfrac{\ddots}{ z-\beta _{n}-
\gamma _{n}G_{\nu }(z)}}}}
\end{equation}
where $\nu$ is a probability measure.

\subsection{Different notions of convolution}

In non-commutative probability, there exist various notions of independence. In this paper we will focus on the notions of independence coming from universal products as classified by Muraki \cite{Mu2}: tensor(classical), free, Boolean and monotone independence.
\subsubsection{Classical convolution}
Recall that the classical convolution of two probability measures $\mu_1,\mu_2$ on $\mathbb{R}$ is defined as the probability measure $\mu_1*\mu_2$ on $\mathbb{R}$ such that
$C_{\mu_1*\mu_2}(t)=C_{\mu_1}(t)+C_{\mu_2}(t),t \in \real,$
where $C_{\mu}(t) = \log \hat{\mu}(t)$ with $\hat{\mu}(t)$ the characteristic function of $\mu$. Classical convolution corresponds to the sum of tensor independent random variables: $\mu_a*\mu_b=\mu_{a+b}$, for $a$ and $b$ independent random variables. 
The (classical) cumulants are the coefficients $c_n = c_n (\mu)$
in the series expansion $$C_{\mu}(t) =\sum^\infty_{n=1}\frac{c_n}{n!} t^n.$$
   
Similar convolutions and related transforms exist for the free, Boolean and monotone theories.

\subsubsection{Free convolution}

Free convolution was defined in \cite{Voi85} for compactly supported probability measures and later extended in \cite{Maa} for the case of finite variance, and in \cite{BeVo93} for the general unbounded case.
Let $G_\mu(z)$ be the Cauchy transform of $\mu \in \mathcal{M}$ and let
$F_\mu(z)$ be its reciprocal $\frac{1}{G_\mu(z)}$. It was proved in Bercovici and Voiculescu \cite{BeVo93}  that there are positive numbers $\eta$ and $M$ such that $F_\mu$ has a right inverse $F_\mu^{-1}$ defined on the region
$\Gamma_{\eta,M}:= \{z\in\mathbb{C}^+; |Re(z)| < \eta Im(z),~~|z|>M\}.$

The Voiculescu transform of $\mu$ is defined by $\phi _{\mu }\left( z\right) =F_{\mu }^{-1}(z)-z,$
on any region of the form $\Gamma_{\eta,M}$ where $F^{-1}_{\mu}$ is defined.

The \emph{free additive convolution} of two probability measures $\mu_1,\mu_2$ on $\mathbb{R}$ is the
probability measure $\mu_1\boxplus\mu_2$ on $\mathbb{R}$ such that 
$$\phi_{\mu_1\boxplus\mu_2}(z) = \phi_{\mu_1}(z) + \phi_{\mu_2}(z), \quad \text{for } z\in\Gamma_{\eta_1,M_1}\cap \Gamma_{\eta_2,M_2}.$$ Free additive convolution corresponds to the sum of free random variables: $\mu_a\boxplus\mu_b=\mu_{a+b}$, for $a$ and $b$ free random variables.  The free cumulants \cite{sp1} are the coefficients
$\kappa_n= \kappa_n (\mu)$ in the series expansion 
\begin{equation}\label{free cumulants}
\phi_\mu(z) = \sum_{n=1}^\infty \kappa_n z^{1-n}.
\end{equation}

\subsubsection{Boolean convolution}

The \emph{Boolean convolution} \cite{S-W} of two probability measures $\mu_1 , \mu_2$ on $\mathbb{R}$ is defined as
the probability measure $\mu_1\uplus\mu_2$ on $\mathbb{R}$ such the transform $K_\mu(z) = z- F_\mu (z)$,
(usually called \emph{self-energy}), satisfies
$$K_{\mu_1\uplus\mu_2}(z) = K_{\mu_1}(z) + K_{\mu_2}(z) ,~~~z\in\mathbb{C}^+.$$ 
Boolean convolution corresponds to the sum of Boolean-independent random variables. Boolean cumulants are defined as 
the coefficients $r_n = r_n (\mu)$ in the series
\begin{equation}\label{Boolean cumulants}
K_\mu(z) =\sum^\infty_{n=1}r_nz^{1-n}.
\end{equation}

\subsubsection{Monotone convolution}
The monotone convolution was defined in \cite{Mu1} and extended to unbounded measures in \cite{Franz}. The \emph{monotone convolution} of two probability measures $\mu_1 , \mu_2$ on $\mathbb{R}$ is defined
as the probability measure $\mu_1\rhd\mu_2$ on $\mathbb{R}$ such that
$$F_{\mu_1\rhd\mu_2}(z) = F_{\mu_1}(F_{\mu_2}(z)) ,~~~z\in\mathbb{C}^+.$$ 
Monotone convolution corresponds to the sum of monotone  independent random variables.
Recently, Hasebe and Saigo \cite{HaSa} have defined a notion of monotone cumulants
$(h_n )_{n\geq1}$ which satisfy that $h_n (\mu^{\rhd k}) = kh_n (\mu) $.

\subsubsection{Moment-Cumulant formulae}
For a measure $\mu$ its classical, free, Boolean and monotone \textit{cumulants} $(c_n)_{n\geq1}$, $(k_n)_{n\geq1}$, $(r_n)_{n\geq1}$, $(h_n)_{n\geq1}$, satisfy the moment-cumulant formulas
\begin{equation}\label{MCF}
m_n
= \sum_{\pi\in \mathcal{P}(n)}c_{\pi}^a
=\sum_{\pi\in NC(n)}\kappa_{\pi}^a
= \sum_{\pi\in \mathcal{I}(n)}r_{\pi}^a
= \sum_{(\pi,\lambda)\in \mathcal{M}(n)}\frac{h_{\pi}^a}{|\pi|!},
\end{equation}
where, for a sequence of complex numbers $(f_n)_{n\geq 1}$ and a partition $\pi=\{V_1,\dots,V_i\}$, we define $f_{\pi}:=f_{|V_1|}\cdots f_{|V_i|}$ and $|\pi|$ is the number of blocks of the partition $\pi$ and where $\mathcal{P}(n), NC(n), \mathcal{I}(n),\mathcal{M}(n)$ denote the set of all, non-crossing, interval and monotone partitions (see \cite{HaSa,sp1,S-W}), respectively. We note here that convergence of the first $n$ moments is equivalent to the convergence of the first $n$ cumulants.

\subsection{Infinite divisibility}

\begin{defi} Let $\circledast$ be one of the above convolutions, namely, $\circledast\in \{*,\uplus,\boxplus, \triangleright\}$. A probability measure $\mu$ is said to be $\circledast$-infinitely divisible if for each $n\in\mathbb{N}$ there exist $\mu_n \in \mathcal{M}$ such that $\mu=\mu_n^{\circledast n}.$ We will denote by $ID(\circledast)$ the set of $\circledast$-infinitely divisible measures. 
\end{defi}

Recall that a probability measure $\mu $ is infinitely divisible in the
classical sense if and only if its classical cumulant transform $\log 
\widehat{\mu }$ has the L\'{e}vy-Khintchine representation
\begin{equation}
\log \widehat{\mu }(u)=i\gamma u-\int_{\mathbb{R}}(e^{iut}-1-\frac{iut}{1+t^2} )\frac{1+t^2}{t^2}\sigma \left( dt\right) ,
\text{ \ \ }u\in\mathbb{R},  \label{levykintchine clasica}
\end{equation} where $\gamma\in \mathbb{R}$ and   $\sigma$ is a finite measure on $\real$.
 If this representation exists, the pair $(\gamma,\sigma )$ is determined in a unique way and is
called the (classical) generating pair of $\mu $. In this case we denote $\mu$ by $\rho_*^{\gamma,\sigma}$

From the Voiculescu Transform one has a representation analogous to L\'evy-Kintchine\'{}s.  Bercovici and Voiculescu 
\cite{BeVo93} proved that a probability measure $\mu $ is $\boxplus $-infinitely divisible if and only if there exists a finite measure $\sigma $ on $
\mathbb{R}$ and a real constant $\gamma $\ such that
\begin{equation}
\phi _{\mu }(z)=\gamma +\int_{\mathbb{R}}\frac{1+tz}{z-t}\sigma (dt),\qquad z\in 
\mathbb{C}^{+}.
\end{equation}
The pair $(\gamma ,\sigma )$ is called the \emph{free} generating pair of $\mu $ and we denote $\mu$ by $\rho_\boxplus^{\gamma,\sigma}.$

For the Boolean case, things are easier. As shown by Speicher and Wourodi \cite{S-W}, any probability measure is infinitely divisible with respect to the Boolean convolution and there is also
a Boolean L\'evy-Kintchine representation. Indeed, it follows then by general Nevanlinna-Pick theory that for any probability measure $\mu$ there exists a real constant $\gamma$ and a finite measure
  $\sigma$ on $\real$, such that
\begin{equation}
K_\mu(z)=\gamma+\int_{\real}\frac{1+tz}{z-t}\,\sigma(d t), \qquad z\in{\mathbb C}^+.
\label{eq2}
\end{equation}
The pair $(\gamma ,\sigma )$ is called the \emph{Boolean} generating pair of $\mu $ and we denote $\mu$ by $\rho_\uplus^{\gamma,\sigma}.$

A characterization of $\triangleright$-infinitely divisible measures was done by Muraki \cite{Mu3} and  Belinschi \cite{Be}. A probability measure $\mu$ belongs to $ID(\rhd)$  if and only if there exists a composition semigroup of reciprocal Cauchy transforms $F_{s+t}=F_s\circ F_t=F_t\circ F_s$ and $F_1=F_\mu$. In this case the map $t\mapsto F_t(z)$ is differentiable for each fixed $z$ in $\mathbb {R}$ and we define the mapping $A_\mu$ on
${\mathbb C}^+$ by 
\[
A_\mu(z)=\frac{d}{d t}\Big|_{t=0}F_t(z), \qquad z\in{\mathbb C}^+.
\]
For mappings of this form there exists $\gamma\in\mathbb{R}$ and a finite
measure $\sigma$, such that
\begin{equation}\label{monotone L-K}
A_\mu(z)=-\gamma-\int_{{\mathbb R}}\frac{1+tz}{z-t}\,\sigma(d t).
\end{equation}
This is the L\'evy-Khintchine formula for monotone convolution and in this case we denote $\mu$ by $\rho_\rhd^{\gamma,\sigma}.$
The monotone cumulants $h_n$ are the coefficients in the series
\begin{equation}\label{monotone cumulants}
-A_\mu(z) =\sum^\infty_{n=1}h_nz^{1-n}.
\end{equation}

An important class of infinitely divisible measures is the class of compound Poisson distributions since any infinitely divisible measure can be approximated by them. 
\begin{defi}
Let $\circledast \in \{*,\uplus,\boxplus, \triangleright\}$. Denote by $k^\circledast_n$ the cumulants with respect to the convolution $\circledast$.  If
$k^\circledast_{n}(\mu)=\lambda m_{n}(\nu )$
for some $\lambda >0$ and some distribution $\nu $ we say that $\mu$ is a $\circledast$-compound
Poisson distribution with rate $\lambda $ and jump distribution $\nu$  and denote $\mu$ by $\pi_\circledast(\lambda,\nu)$. If $\lambda=1$ and $\nu =\delta_1$ then $k^\circledast_{n}(\mu)=1$, for all $n\in\mathbb{N}$, and we call $\mu$ simply a $\circledast$-Poisson.
\end{defi}

\subsection{Bercovici-Pata bijections}

From the various L\'evy-Kintchine representations it is readily seen that there is a bijective correspondence between the infinite divisible measures with respect to the different notions of independence. These bijections are called the Bercovici-Pata bijections, since they were studied by Bercovici and Pata \cite{BePa} in relation to limit theorems and domain of attractions.
\begin{defi}\label{BPB}
\begin{enumerate}

\item The (classical-to-free) Bercovici-Pata bijection $\Lambda:ID(*)\to ID(\boxplus)$ is defined by the application
$\rho^{\gamma,\sigma}_*\mapsto \rho^{\gamma,\sigma}_\boxplus$.
\item The (Boolean-to-free) Bercovici-Pata bijection $\mathbb{B}:\mathcal{M}\to ID(\boxplus)$ is defined by the application
$\rho^{\gamma,\sigma}_\uplus\mapsto \rho^{\gamma,\sigma}_\boxplus$.
\item The (classical-to-monotone) Bercovici-Pata bijection $\Lambda^\rhd:ID(*)\to ID(\rhd)$ is defined by the application
$\rho^{\gamma,\sigma}_*\mapsto \rho^{\gamma,\sigma}_\rhd$.
\end{enumerate}
\end{defi}

The weak continuity of $\Lambda$ and $\Lambda^{-1}$ was proved in \cite{BNT}. On the other hand, the weak continuity of $\mathbb{B}$ and $\mathbb{B}^{-1}$ follows from the continuity of the free and Boolean convolution powers since $\mathbb{B}(\mu)=(\mu^{\boxplus 2})^{\uplus 1/2}$. Finally the weak continuity of $\Lambda^\rhd$ was proved in Hasebe \cite{Ha}. In summary, the arrows of the following commutative diagram are weakly continuous.
$$\begin{tikzpicture}[node distance=1.3cm, auto]
  \node (boo) {$ID(\uplus)=\mathcal{M}$};
  \node (cla) [below of=boo] {$ID(\ast)$};
  \node (fre1) [left of=cla] {$ $};
  \node (fre) [left of=fre1] {$ID(\boxplus)$};
  \node (mon1) [right of=cla] {$ $};
  \node (mon) [right of=mon1] {$ID(\triangleright)$};
  
  \draw[->] (boo) to node [swap] {$\mathbb{B}$} (fre);
  \draw[->] (boo) to node {$ $} (cla);
  \draw[->] (boo) to node {$ $} (mon);
    \draw[->] (fre) to node {$\Lambda^{-1}$} (cla);
  \draw[->] (cla) to node {$\Lambda^{\triangleright}$} (mon);
\end{tikzpicture}$$
\begin{rem}\label{rem2}
If follows from (\ref{free cumulants}) and (\ref{Boolean cumulants}) that the Boolean cumulants of $\mu$ are free cumulants of its image under the Boolean Bercovici-Pata bijection $\mathbb{B}$, namely, $r_n(\mu)=k_n( \mathbb{B}(\mu))$. Similarly, 
$c_n(\mu)=k_n( \Lambda(\mu))$ and $c_n(\mu)=h_n( \Lambda^\rhd(\mu))$.
\end{rem}

\section{Convergence to the Gaussian distribution}
For a random variable $X$ with all the moments,  mean 0 and variance 1, let us denote by
$S_n^{\ast}(X)=(X_1+X_2+\cdots+X_n)/\sqrt{n}$
the normalized sum of $n$ independent copies of $X$. The so-called Central Limit Theorem states that $S_n^{\ast}(X)$ converges, as $n\to\infty$,  to the standard Normal distribution $\mathcal{N}(0,1)$.

On the other hand, the free Central Limit Theorem (see \cite{Boz}, \cite{Voi85}) states that 
 the normalized sum of free copies of $X$ 
converges weakly to the 
standard semicircle distribution  $\mathcal{S}(0,1)$,  
with density  $$\frac{1}{2\pi}\sqrt{4-x^2}, \quad x\in[-2,2].$$ 
 
Similarly, the Boolean Central Limit Theorem (\cite{S-W}) states that the normalized sum of \emph{Boolean}-independent copies of $X$ converges weakly to the 
Bernoulli distribution, $\mathbf{b}:=1/2\delta_{-1}+1/2\delta_1$ .

For monotone independence, the limiting distribution for the Central Limit Theorem (\cite{Mu1}) is the Arcsine distribution, with density $$\frac{1}{\pi\sqrt{2-x^2}} \quad x\in[-\sqrt{2},\sqrt{2}].$$ 

The standard proof for convergence to any of these ``gaussian" distributions consists in showing the convergence of \emph{all} the moments. In this section we will prove that, when staying among infinitely divisible laws, convergence of the 4th moment is enough, namely we will prove Theorems \ref{T3}, \ref{T4} and \ref{T7}.

The main observation is that the following simple lemma together with the continuity properties of the Bercovici-Pata bijections gives the desired results.

\begin{lem}\label{lemma1}
 Let $X_n$ be random variables with variance $1$ and mean $0$, if $E(X_n^4)\rightarrow 1$ then $\mu_{X_n}\rightarrow b$ a symmetric Bernoulli  $\mathbf{b}$.
\end{lem}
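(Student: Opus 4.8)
The plan is to read off everything from the Jacobi parameters and the continued-fraction form \eqref{expansionjacobi} of the Cauchy transform. The target measure $\mathbf{b}=\tfrac12\delta_{-1}+\tfrac12\delta_{1}$ is supported on the two points $\pm1$, so by the criterion recalled in Section 2.2 its Jacobi parameters are $\beta_0=0$, $\gamma_0=1$ and $\gamma_1=0$; equivalently
\begin{equation*}
G_{\mathbf{b}}(z)=\cfrac{1}{z-\cfrac{1}{z}}=\frac{z}{z^2-1}.
\end{equation*}
I would therefore show that the low-order Jacobi parameters of $\mu_n=\mu_{X_n}$ converge to these three values and then substitute into \eqref{expansionjacobi} at level $n=1$, which only requires the existence of the fourth moment.

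First I would express the first Jacobi parameters of $\mu_n$ through its moments $m_k=m_k(\mu_n)$. Since $m_1=0$ and $m_2=1$, orthogonalizing $1,x,x^2$ gives at once $\beta_0=0$, $\gamma_0=m_2-m_1^2=1$, $\beta_1=m_3$, and, from $P_2(x)=x^2-m_3x-1$,
\begin{equation*}
\gamma_1=\frac{\int P_2^2\,d\mu_n}{\int P_1^2\,d\mu_n}=m_4-m_3^2-1.
\end{equation*}
The crucial point is the non-negativity $\gamma_1\geq0$ of the Jacobi parameter (equivalently, positivity of the Hankel determinant of $(m_0,\dots,m_4)$), which yields $m_4\geq 1+m_3^2\geq1$. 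Hence the hypothesis $E(X_n^4)=m_4\to1$ simultaneously forces $m_3\to0$ (so that $\beta_1=m_3\to0$) and $\gamma_1=m_4-m_3^2-1\to0$.

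With these limits in hand I would feed them into the level-one expansion \eqref{expansionjacobi}, namely
\begin{equation*}
G_{\mu_n}(z)=\cfrac{1}{z-\cfrac{1}{z-\beta_1-\gamma_1 G_{\nu_n}(z)}},
\end{equation*}
where $\nu_n$ is a probability measure. By Proposition \ref{WCCT} (or directly from $|G_{\nu_n}(z)|\leq 1/\Im(z)$), the transform $G_{\nu_n}$ is bounded by $1$ on $\{\Im(z)\geq1\}$, so the remainder satisfies $|\gamma_1 G_{\nu_n}(z)|\leq\gamma_1\to0$ uniformly there, while $\beta_1\to0$. Consequently the inner denominator tends to $z$ uniformly on $\{\Im(z)\geq1\}$ and $G_{\mu_n}(z)\to 1/(z-1/z)=G_{\mathbf{b}}(z)$; Proposition \ref{WCCT} then gives $\mu_{X_n}\to\mathbf{b}$ weakly.

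The two points that need care are the identification $\gamma_1=m_4-m_3^2-1$ together with its sign, since it is precisely the positivity of this single Jacobi parameter that converts control of $m_4$ alone into control of both $m_3$ and $\gamma_1$; and the verification that the denominators stay away from zero so that the convergence is genuine. The latter is automatic, because for $\Im(z)\geq1$ one has $\Im G_{\nu_n}(z)\leq0$ and $\gamma_1\geq0$, whence $\Im\bigl(z-\beta_1-\gamma_1 G_{\nu_n}(z)\bigr)\geq\Im(z)\geq1$, keeping every stage of the continued fraction in the appropriate half-plane. Beyond these observations the argument is routine.
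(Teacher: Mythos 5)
Your proof is correct, but it takes a genuinely different route from the paper's. The paper disposes of this lemma in three lines of elementary probability: setting $Y_n=X_n^2$, the hypotheses give $E[Y_n]\to 1$ and $E[Y_n^2]=E[X_n^4]\to 1$, so $\mathrm{Var}(Y_n)\to 0$ and $X_n^2\to 1$ in $L^2$; the mean-zero condition then forces the limiting mass on $\{\pm 1\}$ to split evenly, whence $\mu_{X_n}\to\mathbf{b}$. You instead compute the low-order Jacobi parameters ($\beta_0=0$, $\gamma_0=1$, $\beta_1=m_3$, $\gamma_1=m_4-m_3^2-1$), use the positivity $\gamma_1\geq 0$ to convert the single hypothesis $m_4\to 1$ into both $m_3\to 0$ and $\gamma_1\to 0$, and conclude via the continued-fraction expansion \eqref{expansionjacobi} and Proposition \ref{WCCT}. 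This is precisely the technique the paper reserves for Section 4: your argument is in effect the case $k=1$ of Lemma \ref{lemma2}, supplemented by an explicit verification of the claim, asserted without computation in the proof of Theorem \ref{poisson2}, that convergence of the first $2k+2$ moments gives convergence of the relevant Jacobi parameters --- your observation that positivity of $\gamma_1$ lets $m_4$ alone control $m_3$ is exactly the content of that step at this level. The trade-off: the paper's proof of the lemma is shorter and entirely elementary, while yours is heavier but generalizes immediately to limits supported on finitely many atoms, handles the degenerate case $\gamma_1(\mu_n)=0$ seamlessly, and makes explicit the denominator bound $\Im\bigl(z-\beta_1-\gamma_1 G_{\nu_n}(z)\bigr)\geq\Im(z)\geq 1$ that keeps the continued fraction nondegenerate --- a point Lemma \ref{lemma2} uses only implicitly.
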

\begin{proof}Let $Y_n=X^2_n$. Then $E[Y_n]\rightarrow1$ and $E[Y_n^2]\rightarrow1$. This means that $Var(Y_n)\rightarrow0$ and thus $Y_n\rightarrow1$ in $L^2$. By the condition that $E[X_n]=0$ we see that $\mu_{X_n}\rightarrow \mathbf{b}$.
\end{proof}

Now, we can prove Theorems \ref{T3}, \ref{T4} and \ref{T7} which we state again for the convenience of the reader.
\begin{thm}\label{T9} Let $\{\mu_{X_n}\}_{n>0}$ be a sequence of probability measures with variance $1$ and mean 0.\begin{enumerate}
\item If $\mu_{X_n}\in ID(*)$ and $ E[X_n^4] \rightarrow 3$ then $\mu_{X_n}\rightarrow \mathcal{N}(0,1)$.
\item If $\mu_{X_n}\in ID(\boxplus)$ and $ E[X_n^4] \rightarrow 2$ then $\mu_{X_n}\rightarrow \mathcal{S}(0,1)$.
\item If $\mu_{X_n}\in ID(\rhd)$ and $ E[X_n^4] \rightarrow 1.5$ then $\mu_{X_n}\rightarrow \mathcal{A}(0,1)$.
\end{enumerate}
\end{thm}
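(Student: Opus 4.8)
The plan is to prove all three statements at once by transporting $\mu_{X_n}$ through the Bercovici--Pata bijections to a Boolean measure and then invoking Lemma~\ref{lemma1}. The engine is Remark~\ref{rem2}, which says that each bijection carries one cumulant sequence onto another. Accordingly I would set
\[
\nu_n=
\begin{cases}
\mathbb{B}^{-1}\bigl(\Lambda(\mu_{X_n})\bigr) & \text{in case (1),}\\[2pt]
\mathbb{B}^{-1}(\mu_{X_n}) & \text{in case (2),}\\[2pt]
\mathbb{B}^{-1}\bigl(\Lambda\bigl((\Lambda^{\rhd})^{-1}(\mu_{X_n})\bigr)\bigr) & \text{in case (3).}
\end{cases}
\]
In every case $\nu_n\in\mathcal{M}=ID(\uplus)$, and chaining the identities of Remark~\ref{rem2} shows that the Boolean cumulants of $\nu_n$ equal the classical, free, respectively monotone cumulants of $\mu_{X_n}$: $r_k(\nu_n)=c_k(\mu_{X_n})$, $r_k(\nu_n)=\kappa_k(\mu_{X_n})$, respectively $r_k(\nu_n)=h_k(\mu_{X_n})$.

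Second, I would read off the effect of the moment hypotheses through the moment--cumulant formula~(\ref{MCF}). Since each $\mu_{X_n}$ has mean $0$ and variance $1$, in every theory the first cumulant is $0$ and the second is $1$; in particular every partition containing a singleton block drops out at level four, leaving only the full block and the admissible pair partitions. This yields
\[
m_4=c_4+3,\qquad m_4=\kappa_4+2,\qquad m_4=h_4+\tfrac32,
\]
where $3,2,\tfrac32$ count the surviving pair partitions in $\mathcal{P}(4)$, $NC(4)$ and $\mathcal{M}(4)$. Hence each of the hypotheses $E[X_n^4]\to 3,\,2,\,1.5$ is equivalent to the corresponding $r_4(\nu_n)\to0$. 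Because the first four Boolean cumulants of $\nu_n$ are finite, so are its first four moments, and the interval moment--cumulant formula gives $m_1(\nu_n)=r_1=0$, $m_2(\nu_n)=r_2=1$ and $m_4(\nu_n)=r_4+r_2^2\to1$. Thus $\nu_n$ has mean $0$, variance $1$, and fourth moment tending to $1$, so Lemma~\ref{lemma1} gives $\nu_n\to\mathbf{b}$ weakly.

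Third, I would transport this convergence back by inverting the definition of $\nu_n$ and using the weak continuity of $\mathbb{B}$, $\Lambda^{-1}$ and $\Lambda^{\rhd}$ recorded in the diagram, so that $\mu_{X_n}\to\mathbb{B}(\mathbf{b})$ in case (2), $\mu_{X_n}\to\Lambda^{-1}(\mathbb{B}(\mathbf{b}))$ in case (1), and $\mu_{X_n}\to\Lambda^{\rhd}(\Lambda^{-1}(\mathbb{B}(\mathbf{b})))$ in case (3). It then remains to identify the images of $\mathbf{b}$. Since $\mathbf{b}$ has $r_2=1$ and all higher Boolean cumulants zero, $\mathbb{B}(\mathbf{b})$ has $\kappa_2=1$ and the rest zero, i.e.\ $\mathbb{B}(\mathbf{b})=\mathcal{S}(0,1)$; the same cumulant bookkeeping gives $\Lambda^{-1}(\mathcal{S}(0,1))=\mathcal{N}(0,1)$ and $\Lambda^{\rhd}(\mathcal{N}(0,1))=\mathcal{A}(0,1)$. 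This produces exactly the three limits claimed.

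The one step demanding real care is the monotone identity $m_4=h_4+\tfrac32$, since $\mathcal{M}(4)$ is indexed by ordered, nesting--respecting partitions weighted by $1/|\pi|!$ rather than by plain non-crossing or interval partitions. The coefficient $\tfrac32$ arises from the two non-crossing pairings of $\{1,2,3,4\}$: the side-by-side pairing $\{1,2\},\{3,4\}$ admits both block orderings, while the nested pairing $\{1,4\},\{2,3\}$ admits only the one placing the outer block first, giving $(2+1)/2!=\tfrac32$. As a sanity check this is consistent with the direct computation $E[X^4]=1.5$ for $\mathcal{A}(0,1)$. All remaining steps are formal consequences of Remark~\ref{rem2} and the continuity of the Bercovici--Pata bijections, so I do not expect further obstacles.
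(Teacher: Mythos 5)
Your proposal is correct and follows essentially the same route as the paper: transport $\mu_{X_n}$ to a Boolean measure $\nu_n$ via the Bercovici--Pata bijections so that, by Remark \ref{rem2}, $\nu_n$ has mean $0$, variance $1$ and $m_4(\nu_n)\to 1$, apply Lemma \ref{lemma1} to get $\nu_n\to\mathbf{b}$, and finish by the weak continuity of $\mathbb{B}$, $\Lambda^{-1}$ and $\Lambda^{\rhd}$. Your explicit fourth-moment identities $m_4=c_4+3$, $m_4=\kappa_4+2$ and $m_4=h_4+\tfrac{3}{2}$ (including the ordered-block count giving $\tfrac{3}{2}$ in the monotone case) and the cumulant identification of the images of $\mathbf{b}$ merely spell out steps the paper leaves implicit.
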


\begin{proof} We first prove (2). 
 Recall from Definition \ref{BPB} that $\mathbb{B}$ stands for the Boolean-to-free Bercovici-Pata bijection. Assume $\mu_n=\mathbb{B}(\nu_n)$, for some $\nu_n$. Then by Remark \ref{rem2}, $\nu_n$ has variance $1$ and mean 0, and $m_4(\nu_n)\rightarrow 1$. The previous lemma applies and yields that $\nu_n\rightarrow \mathbf{b}$. By the continuity of $\mathbb{B}$, we deduce that $ \mathbb{B}(\nu_n)\rightarrow \mathbb{B}(\mathbf{b})={S}(0,1)$. Parts (1) and (3) follows the same lines by changing  $\mathbb{B}$ by $\mathbb{B}\circ\Lambda^{-1}$  and $\mathbb{B}\circ\Lambda^{-1}\circ\Lambda^{\rhd}$, respectively.
\end{proof}

\begin{exa}[Boolean Central Limit Theorem]
 Let $X_i$ be  Boolean independent identically distributed random variables with $E(X_i)=0$ and $E(X_i^2)=1$. Then, the random variable $Y_n=\frac{X_1+X_2+...X_n}{\sqrt{n}}$ is infinitely divisible. Moreover, $E(Y_n)=0$, $E(Y_n^2)=1$ and $E(Y_n^4)=r_2+r_4/n\rightarrow r_2=1$. Thus, by Lemma \ref{lemma1}, we see that $Y_n\rightarrow \mathbf{b}$. 
\end{exa}

\begin{exa}[Convergence of the Poisson Distribution to the Normal Distribution]
 Let $X_n$ be a random variable with distribution $\pi(n,\delta_1)$, the random variable $Y_n=\frac{X_n-n}{\sqrt{n}}$ converges weakly to $N(0,1)$. Indeed, $Y_n=\frac{X_n-n}{\sqrt{n}}$ is infinitely divisible. Moreover, $E(Y_n)=0$, $E(Y_n^2)=1$ and $E(Y_n^4)=c_4/n+3c_2^2=1/n+3\rightarrow3$. Hence, by Theorem \ref{T9} we see that $Y_n\rightarrow N(0,1)$
\end{exa}

A similar argument proves the following criteria for approximation to the Poisson distributions. This shall be compared to the results in \cite{NoPe}.

\begin{prop} Fix $\circledast\in \{*,\uplus,\boxplus, \triangleright\}$, let $\mu_n\in ID(\circledast)$ be a sequence of probability measures such that $\mu_n\in ID(\circledast)$ for all $n$, and denote by $\pi_\circledast$ the $\circledast$-Poisson measure. If $\kappa_i(X_n)\rightarrow1$ for $ i=1,2,3,4$ then $\mu_{X_n}\rightarrow \pi_\circledast.$

\end{prop}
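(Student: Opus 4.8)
The plan is to follow verbatim the template of Lemma~\ref{lemma1} and Theorem~\ref{T9}: push everything to the Boolean side through the Bercovici--Pata bijections, where the role played by the symmetric Bernoulli $\mathbf{b}$ for the Gaussian is now played by a different two-atom law for the Poisson. The crucial observation is that the \emph{Boolean} Poisson is explicit and discrete. Indeed, if all Boolean cumulants equal $1$, then by \eqref{Boolean cumulants} one has $K_\mu(z)=\sum_{n\ge1}z^{1-n}=z/(z-1)$, hence $F_\mu(z)=z(z-2)/(z-1)$ and $G_\mu(z)=\tfrac12 z^{-1}+\tfrac12(z-2)^{-1}$, so that $\pi_\uplus=\tfrac12\delta_0+\tfrac12\delta_2$, whose first four moments are $1,2,4,8$. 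Moreover, since the maps $\mathbb{B}$, $\Lambda$, $\Lambda^\rhd$ each carry a cumulant sequence to an identical one (Remark~\ref{rem2}), the four Poisson laws $\pi_*,\pi_\boxplus,\pi_\uplus,\pi_\rhd$ — all of whose $\circledast$-cumulants equal $1$ — are mapped onto one another; in particular $\mathbb{B}(\pi_\uplus)=\pi_\boxplus$, $(\Lambda^{-1}\circ\mathbb{B})(\pi_\uplus)=\pi_*$ and $(\Lambda^\rhd\circ\Lambda^{-1}\circ\mathbb{B})(\pi_\uplus)=\pi_\rhd$.

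First I would prove the Poisson analogue of Lemma~\ref{lemma1}: if $Y_n$ are random variables with $E[Y_n]\to1$, $E[Y_n^2]\to2$, $E[Y_n^3]\to4$ and $E[Y_n^4]\to8$, then $\mu_{Y_n}\to\tfrac12\delta_0+\tfrac12\delta_2$. Put $Z_n=Y_n(Y_n-2)=Y_n^2-2Y_n$. Then $E[Z_n]=E[Y_n^2]-2E[Y_n]\to0$ and $E[Z_n^2]=E[Y_n^4]-4E[Y_n^3]+4E[Y_n^2]\to 8-16+8=0$, so $Z_n\to0$ in $L^2$ and hence in probability. Because the polynomial $y(y-2)$ vanishes exactly at $y\in\{0,2\}$, the mass of $\mu_{Y_n}$ concentrates near $\{0,2\}$, while the bound $E[Y_n^2]\to2$ gives tightness via Chebyshev and forbids escape to infinity. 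Thus every weak subsequential limit is supported on $\{0,2\}$, i.e.\ of the form $q\,\delta_2+(1-q)\,\delta_0$, and the constraint $E[Y_n]\to1$ forces $2q=1$, so $q=\tfrac12$; the limit being unique, the whole sequence converges.

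Then I would assemble the four cases exactly as in the proof of Theorem~\ref{T9}. Fixing $\circledast$ and recalling that convergence of the first four $\circledast$-cumulants is equivalent to convergence of the first four moments, I transport the hypothesis to the Boolean side: set $\nu_n:=\mu_n$ in the Boolean case, $\nu_n:=\mathbb{B}^{-1}(\mu_n)$ in the free case, and compose further with $\Lambda$ (classical case) or with $\Lambda$ and $(\Lambda^\rhd)^{-1}$ (monotone case). In each case Remark~\ref{rem2} identifies the Boolean cumulants of $\nu_n$ with the $\circledast$-cumulants of $\mu_n$, so $r_i(\nu_n)\to1$ for $i=1,\dots,4$ and therefore the first four moments of $\nu_n$ tend to $1,2,4,8$. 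The lemma yields $\nu_n\to\pi_\uplus$, and the weak continuity of the corresponding bijection ($\mathbb{B}$, resp.\ $\Lambda^{-1}\circ\mathbb{B}$, resp.\ $\Lambda^\rhd\circ\Lambda^{-1}\circ\mathbb{B}$) together with the Poisson correspondence above gives $\mu_n\to\pi_\circledast$.

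The main technical point, just as in Lemma~\ref{lemma1}, is not any algebraic identity but the passage from the $L^2$-statement $Y_n(Y_n-2)\to0$ to genuine weak convergence: one must pair the concentration on $\{0,2\}$ with tightness coming from the uniformly bounded second moments, so that no mass leaks away, and then exploit the rigidity of a two-atom law to see that matching merely the first four moments already pins down both atoms and their weights. Everything else — computing $\pi_\uplus$, verifying that the four Poisson measures correspond under the bijections, and transferring cumulants across them — is routine given Remark~\ref{rem2} and the weak-continuity statements collected in Section~2.
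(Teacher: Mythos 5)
Your proposal is correct and follows essentially the same route as the paper: identify the Boolean Poisson explicitly as a two-atom measure, prove a four-moment concentration lemma modeled on Lemma~\ref{lemma1}, and transport the conclusion through the weakly continuous Bercovici--Pata bijections exactly as in the proof of Theorem~\ref{T9}. In fact your computation $K_{\pi_\uplus}(z)=z/(z-1)$, giving $\pi_\uplus=\tfrac12\delta_0+\tfrac12\delta_2$ with moments $2^{i-1}$, corrects a slip in the paper's own proof, which misstates the Boolean Poisson as $\tfrac12\delta_0+\tfrac12\delta_1$ (a measure whose Boolean cumulants are $2^{-n}$ rather than $1$).
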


\begin{proof} The Boolean Poisson $\pi_\uplus$ has distribution $1/2\delta_{0}+1/2\delta_1$. Thus one needs to show that convergence of the first fourth moments is enough to ensure weak convergence, but this is just a simple modification of Lemma \ref{lemma1}. From this point the proof of Theorem \ref{T9} is also valid here by just replacing $\mathbf{b}$ by $1/2\delta_{0}+1/2\delta_1$.
\end{proof}

Since the proofs given above seem to be produced ``ad hoc", one may ask if a more general phenomenan is hidden or this case is very particular. This is the content of next section.

\section{Convergence  to Compound Poisson distributions}

In this section we consider convergence to compound Poisson and more generally convergence to freely infinitely divisible measures with L\'evy measure whose support is finite. 

We first prove the following basic lemma, which replaces Lemma \ref{lemma1}.

\begin{lem}\label{lemma2}
Let $\mu$ be a probability measure with Jacobi parameters $\{\gamma_i,\beta_i\}_{i=1}^k$ with $\gamma_k=0$. If $\{\mu_n\}_{n\in\mathbb{N}}$ is a sequence of probability measures such that $\gamma_i(\mu_n)\rightarrow \gamma_i(\mu)$ and $\beta_i(\mu_n)\rightarrow\beta_i(\mu_n)$ for all $i=0,\dots,k$  then $\mu_n$ converges weakly to $\mu$.
\end{lem}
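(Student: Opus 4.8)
The plan is to work entirely at the level of Cauchy transforms, using the continued fraction expansion (\ref{expansionjacobi}) together with the fact that $\gamma_k(\mu)=0$ forces the fraction for $\mu$ to terminate, and then to invoke Proposition \ref{WCCT} to pass from convergence of transforms to weak convergence. (I read the hypothesis as $\beta_i(\mu_n)\to\beta_i(\mu)$ and $\gamma_i(\mu_n)\to\gamma_i(\mu)$ for $i=0,\dots,k$, with $\gamma_k(\mu)=0$.)

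First I would apply the expansion (\ref{expansionjacobi}) at level $k$ to each $\mu_n$, obtaining probability measures $\nu_n$ so that $G_{\mu_n}$ is the finite continued fraction in the data $\beta_0(\mu_n),\gamma_0(\mu_n),\dots,\beta_k(\mu_n)$ whose bottom rung carries the tail term $\gamma_k(\mu_n)\,G_{\nu_n}(z)$. Since each $\nu_n$ is a probability measure, the elementary bound $|G_{\nu_n}(z)|\le 1/\Im(z)$ gives $|G_{\nu_n}(z)|\le 1$ uniformly on $\{\Im(z)\ge 1\}$. Because $\gamma_k(\mu_n)\to\gamma_k(\mu)=0$, the tail term $\gamma_k(\mu_n)\,G_{\nu_n}(z)$ then tends to $0$ uniformly on that region. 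As $\gamma_k(\mu)=0$, the continued fraction for $\mu$ itself terminates, so $G_\mu$ is exactly the finite rational expression obtained from the same scheme with the tail set to $0$.

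Next I would propagate the convergence of the bottom data up through the finitely many levels of the fraction. The key estimate is that every partial denominator stays in $\mathbb{C}^+$ with imaginary part at least $\Im(z)$: since each $\gamma_i\ge 0$ and Cauchy transforms send $\mathbb{C}^+$ into $\mathbb{C}^-$, a short induction from the bottom rung upward shows $\Im(\text{partial denominator})\ge\Im(z)\ge 1$ at each stage. Consequently every reciprocal appearing is well-defined and uniformly bounded, no denominator degenerates, and the finite continued fraction is jointly continuous in its parameters and in $z$ on $\{\Im(z)\ge 1\}$. Feeding in $\beta_i(\mu_n)\to\beta_i(\mu)$, $\gamma_i(\mu_n)\to\gamma_i(\mu)$ and the vanishing tail therefore yields $G_{\mu_n}(z)\to G_\mu(z)$ for every $z$ with $\Im(z)\ge 1$. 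Proposition \ref{WCCT} then converts this into the desired weak convergence $\mu_n\to\mu$.

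I expect the only real obstacle to be the bookkeeping in the second step, namely verifying that the partial denominators remain uniformly bounded away from zero so that the small tail error is not amplified as it is pushed up the fraction. This is precisely where the nonnegativity of the $\gamma_i$ and the half-plane mapping property of Cauchy transforms are needed; once the imaginary-part bound $\Im(\text{partial denominator})\ge\Im(z)$ is secured, the remaining propagation of limits is routine and the conclusion follows.
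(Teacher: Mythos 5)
Your proposal takes essentially the same route as the paper's proof: expand $G_{\mu_n}$ via the continued fraction (\ref{expansionjacobi}) at level $k$, use the uniform bound on Cauchy transforms on $\{z:\Im(z)\geq 1\}$ to show the tail term $\gamma_k(\mu_n)G_{\nu_n}(z)$ vanishes, and conclude pointwise convergence of $G_{\mu_n}$ to $G_\mu$ and hence weak convergence via Proposition \ref{WCCT}. Your second step --- the inductive bound $\Im(\text{partial denominator})\geq\Im(z)$ guaranteeing that no denominator degenerates as the parameters converge --- is correct and merely makes explicit a continuity-of-the-finite-fraction point that the paper's proof asserts without comment.
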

\begin{proof}
Let $G_\mu$ be the Cauchy transform of $\mu_n $. By Equation (\ref{expansionjacobi}) we can expand $G_\mu$ as a continued fraction as follows 
$$G_{\mu_n }(z)=\frac{1}{z-\beta
_{0}-\dfrac{\gamma _{0}}{~~~\dfrac{\ddots}{ z-\beta _{n}-\gamma _{n}G_{\nu }(z)}}}, $$
where $\nu$ is some probability measure. Now, recall that $G_{\nu }(z)$ is bounded in the set $\{z : \Im(z)\geq 1\}$ and thus, since $\gamma_n\rightarrow0$ we see that $\gamma _{n}G_{\nu }(z)\rightarrow 0 $. This implies the pointwise convergence
$$G_{\mu_n }(z) \rightarrow 
\frac{1}{z-\beta
_{0}-\dfrac{\gamma _{0}}{~~~\dfrac{\ddots}{ z-\beta _{n}}}}$$
 in the set $\{z : \Im(z)\geq 1\}$, which then implies the weakly convergence $\mu_n\to\mu$.
\end{proof}

Now we are in position of proving the main result of the section which contains Theorem \ref{T7} as a special case.  
\begin{thm}\label{poisson2} Let $\circledast\in \{*,\uplus,\boxplus, \triangleright\}$ and $\rho_\circledast^{\gamma, \sigma}\in ID(\circledast)$ with $\circledast$-L\'evy pair $(\gamma,\sigma)$.  Furthermore, assume that $\sigma=\sum_{i=1}^k a_i\delta_{b_i}$.
 If $\{\mu_n\}_{n\in\mathbb{N}}$ is a sequence of $\circledast$-infinitely probability measures such that $m_n(\mu)\rightarrow m_n(\rho_\circledast^{\gamma, \sigma})$,  for $ i=1,\dots,2k+2$ , then $\mu_n$ converges weakly to $ \rho_\circledast^{\gamma, \sigma}$.
\end{thm}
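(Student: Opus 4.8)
The plan is to reduce, via the Bercovici--Pata bijections, to a single measure on $\mathcal{M}=ID(\uplus)$ that is \emph{finitely supported}, and then to detect weak convergence through finitely many Jacobi parameters using Lemma \ref{lemma2}. The crucial structural observation is that the Boolean infinitely divisible measure $\mu^{*}:=\rho_{\uplus}^{\gamma,\sigma}$ attached to the pair $(\gamma,\sigma)$ with $\sigma=\sum_{i=1}^{k}a_{i}\delta_{b_{i}}$ has only finitely many atoms. Indeed, by (\ref{eq2}) its self-energy
\begin{equation*}
K_{\mu^{*}}(z)=\gamma+\sum_{i=1}^{k}a_{i}\frac{1+b_{i}z}{z-b_{i}}
\end{equation*}
is a rational function with at most $k$ poles, so $F_{\mu^{*}}(z)=z-K_{\mu^{*}}(z)$ is rational of the form (numerator of degree $k+1$) over (denominator of degree $k$), and $G_{\mu^{*}}=1/F_{\mu^{*}}$ is rational with at most $k+1$ poles. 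A probability measure with rational Cauchy transform is a finite convex combination of point masses at those poles, so $\mu^{*}$ is supported on $p\le k+1$ atoms, which by the criterion recalled in Section 2 means $\gamma_{p-1}(\mu^{*})=0$ with $p-1\le k$.

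Next I would set up the cumulant transfer. For each $\circledast$ let $\Phi_{\circledast}\colon ID(\circledast)\to ID(\uplus)=\mathcal{M}$ be the Bercovici--Pata bijection sending $\rho_{\circledast}^{\gamma,\sigma}$ to $\rho_{\uplus}^{\gamma,\sigma}$, namely $\Phi_{\uplus}=\mathrm{id}$, $\Phi_{\boxplus}=\mathbb{B}^{-1}$, $\Phi_{*}=\mathbb{B}^{-1}\circ\Lambda$ and $\Phi_{\rhd}=\mathbb{B}^{-1}\circ\Lambda\circ(\Lambda^{\rhd})^{-1}$; each is a bijection whose inverse, the forward map $ID(\uplus)\to ID(\circledast)$, is weakly continuous by the results collected in Section 2 (this is exactly the continuity used in the proof of Theorem \ref{T9}). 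Writing $\nu_{n}:=\Phi_{\circledast}(\mu_{n})$, Remark \ref{rem2} gives $k^{\circledast}_{i}(\mu_{n})=r_{i}(\nu_{n})$, and in the same way $k^{\circledast}_{i}(\rho_{\circledast}^{\gamma,\sigma})=r_{i}(\mu^{*})$. Because the moment--cumulant relations (\ref{MCF}) are triangular, the hypothesis $m_{i}(\mu_{n})\to m_{i}(\rho_{\circledast}^{\gamma,\sigma})$ for $i\le 2k+2$ is equivalent to $k^{\circledast}_{i}(\mu_{n})\to k^{\circledast}_{i}(\rho_{\circledast}^{\gamma,\sigma})$, hence to $r_{i}(\nu_{n})\to r_{i}(\mu^{*})$, hence to $m_{i}(\nu_{n})\to m_{i}(\mu^{*})$ for all $i\le 2k+2$.

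Then I would feed this into Lemma \ref{lemma2}. The Jacobi parameters $\gamma_{0},\dots,\gamma_{p-1},\beta_{0},\dots,\beta_{p-1}$ of a measure are rational functions of its moments $m_{1},\dots,m_{2p}$, continuous wherever the squared norms $\|P_{0}\|^{2},\dots,\|P_{p-1}\|^{2}$ of the orthogonal polynomials remain positive; these are positive at $\mu^{*}$ precisely because $\mu^{*}$ carries exactly $p$ atoms. Since $2p\le 2k+2$, the convergence $m_{i}(\nu_{n})\to m_{i}(\mu^{*})$ forces $\gamma_{i}(\nu_{n})\to\gamma_{i}(\mu^{*})$ and $\beta_{i}(\nu_{n})\to\beta_{i}(\mu^{*})$ for $i=0,\dots,p-1$, with $\gamma_{p-1}(\mu^{*})=0$. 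Lemma \ref{lemma2} (with its index equal to $p-1$) then yields $\nu_{n}\to\mu^{*}$ weakly, and weak continuity of $\Phi_{\circledast}^{-1}$ gives $\mu_{n}=\Phi_{\circledast}^{-1}(\nu_{n})\to\Phi_{\circledast}^{-1}(\mu^{*})=\rho_{\circledast}^{\gamma,\sigma}$.

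The main obstacle, and the step deserving the most care, is making precise that exactly $2k+2$ moments suffice. This rests on two points: that $\mu^{*}$ is finitely supported with at most $k+1$ atoms (the rationality argument above, which is what converts ``all moments'' into ``finitely many Jacobi parameters''), and that the finitely many Jacobi parameters involved depend continuously on the moments along the sequence. The latter is delicate near the boundary of moment space, where higher Hankel determinants vanish; it works here only because we stop the orthogonalization exactly at the level $p-1$ at which $\gamma_{p-1}(\mu^{*})=0$ while all earlier norms $\|P_{i}\|^{2}$ remain strictly positive, so the relevant parameters stay in the region where the moment-to-Jacobi map is continuous.
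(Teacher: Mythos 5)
Your proposal is correct and follows essentially the same route as the paper's (very terse) proof: reduce to the Boolean case via the Bercovici--Pata bijections and Remark \ref{rem2}, observe from the Boolean L\'evy--Khintchine representation that $\rho_\uplus^{\gamma,\sigma}$ is atomic with at most $k+1$ atoms, convert moment convergence into convergence of the finitely many relevant Jacobi parameters, apply Lemma \ref{lemma2}, and finish by weak continuity of the bijections. Your writeup in fact supplies details the paper only asserts --- the rationality argument for finite support, and the justification that the moment-to-Jacobi map is continuous precisely because one stops the orthogonalization at level $p-1$ where all earlier norms $\Vert P_i\Vert^2$ stay positive --- which is the right way to make the paper's claimed equivalence of moment and Jacobi-parameter convergence rigorous.
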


\begin{proof}
 The proof is a mere modification of the proof of Theorem \ref{T3}. The only further observations to be made in this case are the following. First,  if $\rho_\circledast^{\gamma, \sigma}$ is a $\uplus$-infinitely probability measure with $\uplus$-L\'evy pair $(\gamma,\sigma)$ and $\sigma=\sum_{i=1}^k a_i\delta_{b_i}$, then $\rho_\circledast^{\gamma, \sigma}$ is of the form $\sum_{i=1}^{k+1} a_i\delta_{b_i}$. This easily follows from the Boolean L\'evy-Kintchine representation. Second, the convergence of the first $2k+2$ moments is equivalent to the convergence of the first $2k+2$ Jacobi parameters.  Thus  Lemma \ref{lemma2} settles the Boolean case. Finally, we may use again the continuity of the Bercovici-Pata bijection to get the result for the other cases.
\end{proof}

As an example we get  the analog of Theorem 1.1 in Deya and Nourdin \cite{NoDe}, for the so-called Tetilla Law.
\begin{exa} Consider $\mathcal{T}$ the distribution with density \begin{equation*} \label{comw2}
f(t)=\frac{\sqrt{3}}{2\pi \mid t\mid}\left( \frac{3t^2+1}{9h(t)}-h(t)\right),~~~~\mid t\mid\leq\sqrt{(11+5\sqrt{5})/2}, 
\end{equation*}
where 
$$h(t)=\sqrt[3]{\frac{18t^2+1}{27}+\sqrt{\frac{t^2(1+11t^2-t^4)}{27}}}.$$ 
This is the distribution of free commutator $s_1s_2+s_2s_1$ where  $s_1$ and $s_2$ are free semicircle variables. If  $X_n\in ID(\boxplus)$ is a sequence of random variables such that $E[X_n^i]\rightarrow m_i(\mu)$, for $ i=1,...,6,$ then $X_n$ converges in distribution to $\mathcal{T}$. Indeed, $\mathcal{T}$ is a free compound Poisson  $\pi_\boxplus(\lambda,\nu)$ with $\lambda=2$ and $\nu=\mathbf{b}$, thus by Theorem \ref{T7} we get the desired result.
\end{exa}

Finally, as a direct consequence of Theorem \ref{poisson2} we recover a theorem of Nourdin and Poly \cite{NoPo}.

\begin{cor} [Theorem 4.3 \cite{NoPo}] Let $f\in L_s^2(\mathbb{R}^2_+)$ with $0< rank(f) <\infty$  let $\mu_0\in\mathbb{R}$ and let $A\sim S(0,\mu^2_0) $ be
independent of the underlying free Brownian motion $S$. Assume that $|\mu_0|+||f||_{L^2(\mathbb{R}^2_+)}>0$ and set
$$Q(x)=x^{2(1+\mathbf{1}_{\{\mu_0\neq0\}})}\prod^{a(f)}_{i=1}(x-\lambda_i(f))^2.
$$
Let $F_n$ be a sequence of double Wigner integrals. Then as $n\to\infty$, we have

(i)$F_n\rightarrow A + I^S_2(f))$ in law

if and only if the following are satisfied:

(ii-a) $k_2(F_n)\rightarrow k_2(A + I^S_2(f))$;

(ii-b) $\sum^{degQ}_{r=3}\frac{Q^{(r)}(0)}{r!}k_r(F_n)\rightarrow \sum^{degQ}_{r=3}\frac{Q^{(r)}(0)}{r!}( I^S_2(f))$;

(ii-c) $k_r(F_n)$ for $a(f)$ consecutive values of $r$, with $r \geq 2(1+\mathbf{1}_{\{\mu_0\neq0\}}).$
\end{cor}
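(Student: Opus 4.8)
The plan is to read the corollary as a single instance of Theorem~\ref{poisson2} in the free case $\circledast=\boxplus$, so that the analytic sufficiency argument of Nourdin--Poly gets replaced by the continued-fraction argument behind Lemma~\ref{lemma2}. First I would record the two structural facts that make Theorem~\ref{poisson2} applicable. On the one hand, every element of the second Wigner chaos is freely infinitely divisible, so $\mu_{F_n}\in ID(\boxplus)$ for each $n$; and since a semicircular element is freely infinitely divisible and $A$ is free from $I_2^S(f)$, the limit $L:=A+I_2^S(f)$ also lies in $ID(\boxplus)$. On the other hand, writing $T_f$ for the Hilbert--Schmidt operator with kernel $f$, the free cumulants of $L$ are $k_1(L)=0$, $k_2(L)=\mu_0^2+\mathrm{Tr}(T_f^2)$ and $k_r(L)=\mathrm{Tr}(T_f^r)=\sum_i m_i\lambda_i(f)^r$ for $r\geq 3$, where $\lambda_1(f),\dots,\lambda_{a(f)}(f)$ are the distinct nonzero eigenvalues of $T_f$ with multiplicities $m_i$. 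Hence the free L\'evy measure of $L$ is supported on $\{\lambda_1(f),\dots,\lambda_{a(f)}(f)\}\cup\{0\}$, the atom at $0$ (coming from $A$) being present exactly when $\mu_0\neq0$, so that $L=\rho^{\gamma,\sigma}_\boxplus$ with $\sigma=\sum_{i=1}^{k}a_i\delta_{b_i}$ for $k=a(f)+\mathbf{1}_{\{\mu_0\neq0\}}$ atoms.

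The key numerical observation is then that $\deg Q=2(1+\mathbf{1}_{\{\mu_0\neq0\}})+2a(f)=2k+2$, which is exactly the number of moments demanded by Theorem~\ref{poisson2}. Therefore, once I prove that conditions (ii-a), (ii-b), (ii-c) together are equivalent to convergence of the first $2k+2$ free cumulants of $F_n$ to those of $L$, the implication (ii)$\Rightarrow$(i) follows immediately from Theorem~\ref{poisson2} applied to $\sigma=\sum_{i=1}^{k}a_i\delta_{b_i}$. The reverse implication (i)$\Rightarrow$(ii) is the soft one: weak convergence inside a fixed Wigner chaos upgrades to convergence of all moments (by the equivalence of $L^p$-norms in a fixed chaos), hence to convergence of all free cumulants, so the three conditions, being particular linear combinations of those cumulants, hold automatically.

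It thus remains to translate (ii-a,b,c) into convergence of the whole block $k_2(F_n),\dots,k_{2k+2}(F_n)$, and this is where I expect the real work to lie. The structure to exploit is that, for a double Wigner integral $F_n=I_2^S(g_n)$, each cumulant is a power sum of the eigenvalues of $T_{g_n}$, namely $k_r(F_n)=\mathrm{Tr}(T_{g_n}^r)$ for $r\geq2$, while the target cumulants are the matching power sums of the configuration of $f$ together with the semicircular mass $\mu_0^2$ in degree two. The polynomial $Q$ encodes exactly this target spectrum: its roots are $0$ and the $\lambda_i(f)$, so $Q(T_f)=0$, and since $Q$ has vanishing constant and linear terms this yields the scalar identity $\sum_{r=2}^{\deg Q}\frac{Q^{(r)}(0)}{r!}\,k_r(I_2^S(f))=\mathrm{Tr}\big(Q(T_f)\big)=0$. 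I would use this to show that (ii-c), furnishing $a(f)$ consecutive cumulants, feeds the order-$a(f)$ linear recurrence whose characteristic polynomial is $\prod_i(x-\lambda_i(f))$; that (ii-b), the single functional weighted by the Taylor coefficients of $Q$, fixes the remaining degree of freedom tied to the high-multiplicity root at $0$; and that (ii-a) supplies $k_2$, into which the semicircular contribution $\mu_0^2$ enters. Propagating the recurrence then forces convergence of all of $k_2(F_n),\dots,k_{2k+2}(F_n)$.

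The main obstacle is precisely this last translation: checking that the economical package (ii-a,b,c) is rich enough to pin down the full block of the first $2k+2$ cumulants. What makes it work, and what I would need to make rigorous, is the power-sum constraint peculiar to the second Wigner chaos together with the annihilation identity $Q(T_f)=0$; the generic count of scalar conditions is smaller than the number of cumulants, so the argument genuinely uses that the $F_n$ lie in a fixed chaos. Once that equivalence is in place, the corollary is a direct application of Theorem~\ref{poisson2} with $\circledast=\boxplus$.
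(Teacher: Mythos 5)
Your reduction coincides with the paper's: the printed proof consists essentially of your first two paragraphs --- elements of the first and second Wigner chaos are freely infinitely divisible, the limit $A+I_2^S(f)$ is $\rho_\boxplus^{\gamma,\sigma}$ with $\sigma$ finitely supported on $\{\lambda_1(f),\dots,\lambda_{a(f)}(f)\}$ plus an atom at $0$ carrying the semicircular part (which is what the paper means by ``(i-a) corresponds to the Gaussian part''), and necessity (i)$\Rightarrow$(ii) is dispatched by boundedness in a fixed chaos. Your count $\deg Q = 2k+2$ matching the number of moments required by Theorem \ref{poisson2} is correct and is the right bookkeeping. The paper stops there and leaves the translation of (ii-a), (ii-b), (ii-c) into convergence of the full block $k_2(F_n),\dots,k_{2k+2}(F_n)$ implicit; you correctly identify this as the real work, but your sketch for it contains a genuine flaw.

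The cumulants $k_r(F_n)=\mathrm{Tr}(T_{g_n}^r)$ satisfy the order-$a(f)$ linear recurrence with characteristic polynomial $\prod_i(x-\lambda_i(f))$ only when the spectrum of $T_{g_n}$ lies in the zero set of that polynomial --- true for the limit, false at any finite $n$, where $T_{g_n}$ has arbitrary (and possibly growing-rank) spectrum. So ``propagating the recurrence'' from the $a(f)$ consecutive values in (ii-c) does not by itself force convergence of the remaining cumulants; the number of scalar conditions really is smaller than $2k+2$, as you noticed, and the recurrence cannot bridge that gap. The mechanism that does close it is positivity, which your sketch never invokes: every root of $Q$ is doubled, so $Q\geq 0$ on $\mathbb{R}$; condition (ii-a) bounds $\sum_j \lambda_j(g_n)^2$ and confines the spectra to a compact set; then (ii-b), rewritten using your identity $\mathrm{Tr}\,Q(T_f)=0$, gives $\sum_j Q\bigl(\lambda_j(g_n)\bigr)=\mathrm{Tr}\,Q(T_{g_n})\to 0$, and nonnegativity of each summand forces the spectral mass of $T_{g_n}$ to concentrate on the zero set $\{0,\lambda_1(f),\dots,\lambda_{a(f)}(f)\}$. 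Only after this concentration step do the $a(f)$ consecutive cumulants of (ii-c) pin down the limiting multiplicities (via a Vandermonde-type argument on finitely many possible atoms), with (ii-a) recovering the mass at $0$, i.e.\ the Gaussian part; at that point all of $k_2(F_n),\dots,k_{2k+2}(F_n)$ converge and Theorem \ref{poisson2} applies exactly as you intend. To be fair, the paper itself does not carry out this step either, deferring in effect to the Nourdin--Poly computation; but as a self-contained argument your third paragraph, as written, would fail without the positivity input.
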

\begin{proof} The fact the \emph{(i)} implies \emph{(ii)} follows from the boundedness of the sequence. To prove that \emph{(ii)} implies \emph{(i)} we note that since $F_n$ is in the first and second chaos then $F_n$ is freely  infinitely divisible. Moreover $F$ has a L\'evy pair $(\gamma,\sigma)$ with $\sigma$ with finite support. We note here that \emph{(i-a)} corresponds to the Gaussian part.\end{proof}

\end{document}